\numberwithin{equation}{section} 
\renewenvironment{proof}{{\bfseries Proof.}}{\qed} 
\def\subsection{\@startsection{subsection}{3}%
  \z@{.5\linespacing\@plus.7\linespacing}{.1\linespacing}%
  {\bfseries}}
\newtheorem{theorem}{Theorem}[section] 
\newtheorem{corollary}[theorem]{Corollary}
\newtheorem{remark}[theorem]{Remark} 
\def\R{\mathbb {R}}
\def\C{\mathbb {C}}
\def\Z{\mathbb {Z}}
\def\ZC{\mathcal {Z}}
\def\OC{\mathcal {O}}
\def\g{\mathfrak {g}}
\def\h{\mathfrak {h}}
\def\p{\mathfrak {p}}
\def\k{\mathfrak {k}}
\def\s{\mathfrak {s}}
\def\a{\mathfrak {a}}
\def\o{\mathfrak {o}}
\def\l{\mathfrak {l}}
\def\z{\mathfrak {z}}
\def\m{\mathfrak {m}}
\begin{document} 
 
\title [On the nilpotent orbits in Lie algebras]{On the second cohomology of
nilpotent orbits in exceptional Lie algebras}  
 
\author[P. Chatterjee]{Pralay Chatterjee}

\address{The Institute of Mathematical Sciences, HBNI, C.I.T. Campus, 
  Tharamani, Chennai 600113, India}  
 \email{pralay@imsc.res.in} 

\author[C. Maity]{Chandan Maity}
 
\address{The Institute of Mathematical Sciences, HBNI, C.I.T. Campus, 
  Tharamani, Chennai 600113, India}  
 \email{cmaity@imsc.res.in} 

\subjclass[2010]{57T15, 17B08}

\keywords{Nilpotent orbits, Exceptional Lie algebras, Second cohomology.}

\begin{abstract}
In \cite{BC}, the second de Rham cohomology groups of nilpotent orbits in all the 
complex simple Lie algebras are described. 
In this paper we consider non-compact non-complex exceptional Lie algebras, and compute
the dimensions of the second cohomology groups for most of the nilpotent orbits. 
For the rest of cases of nilpotent orbits, which are not covered in the above computations, we obtain upper bounds 
for the dimensions of the second cohomology groups.  
\end{abstract}

\maketitle

\section{Introduction}
Let $G$ be a connected real simple Lie group with Lie algebra $\g$. An element $X \in 
\g$ is called {\it nilpotent} if ${\rm ad}(X): \g \rightarrow \g$ is a nilpotent 
operator. Let $\OC_X := \{{\rm Ad}(g)X \mid g \in G \}$ be the corresponding {\it nilpotent orbit}  
under the adjoint action of $G$ on $\g$.
Such nilpotent orbits form a rich class of 
homogeneous spaces, and they are studied at the interface of several disciplines in 
mathematics such as Lie theory, symplectic geometry, representation theory, algebraic 
geometry. Various topological aspects of such orbits have drawn attention over the 
years; see \cite{CM}, \cite{M} and references therein for an account. In 
\cite[Proposition 1.2]{BC} for a large class of semisimple Lie groups a criterion is 
given for the exactness of the Kostant-Kirillov two form on arbitrary adjoint orbits 
which in turn led the authors asking the natural question of describing the full 
second cohomology groups of such orbits. Towards this, in \cite{BC}, the second 
cohomology groups of nilpotent orbits in all the complex simple Lie algebras, under 
the adjoint actions of the corresponding complex groups, are computed. In this paper
we continue the program of studying the second cohomology groups of nilpotent orbits which was initiated in  
\cite{BC}. We compute the second cohomology groups for
most of the nilpotent orbits in non-compact non-complex exceptional
Lie algebras, and for the rest of the nilpotent orbits in non-compact non-complex exceptional
Lie algebras we give upper bounds of the dimensions of 
second cohomology groups; see Theorems \ref{G-2(2)}, \ref{F-4(4)}, \ref{F-4(-20)}, \ref{E-6(6)}, \ref{E-6(2)}, \ref{E-6(-14)}, \ref{E-6(-26)},  \ref{E-7(7)}, \ref{E-7(-5)}, \ref{E-7(-25)}, \ref{E-8(8)}, \ref{E-8(-24)}.
In particular, our computations yield that the second cohomologies vanish for all the nilpotent orbits in $F_{4(-20)}$ and $E_{6(-26)}$.

\section{Notation and background}
In this section we fix some general notation, and mention a basic result which will be used in this paper. 
A few specialized notation are defined as and when they occur later.

The {\it center} of a Lie algebra $\g$ is denoted by $ \z (\g)$. We 
denote Lie groups by capital letters, and unless mentioned otherwise we denote their Lie algebras by the corresponding lower case German letters. Sometimes, for convenience,  the Lie algebra of a Lie group $G$ is also denoted by ${\rm Lie} (G)$.
The connected component of a Lie group $G$ containing the identity element is denoted by $G^{\circ}$.
For a subgroup $H$ of $G$ and a subset  $S$ of $\g$, the
subgroup of $H$ that fixes $S$ point wise is called the {\it centralizer} of $S$ in $H$ and is denoted by $\ZC_{H} (S)$. Similarly, for a Lie subalgebra $\h 
\subset \g$ and a subset $S \subset \g$, by  $ \z_\h (S)$ we will denote the subalgebra consisting elements of $\h$ that commute with every element of $S$. 

If $G$ is a Lie group with Lie algebra $\g$, then it is immediate that the coadjoint action of $G^{\circ}$ on 
$\z(\k)^*$ is trivial; in particular, one obtains a natural action of $G/G^{\circ}$ on $\z (\k)^*$. We denote by
$[\z (\g)^*]^{G/G^{\circ}}$ the space of fixed points of $\z (\g)^*$ under the action of $G/G^{\circ}$. 

For a real semisimple Lie group $G$, an element $X \in \g$ is called {\it nilpotent} if ${\rm ad}(X): \g \rightarrow \g$ is a nilpotent operator.
A {\it nilpotent orbit} is an orbit of a nilpotent element in $\g$ under the adjoint representation of $G$; for a nilpotent element
$X \in \g$ the corresponding nilpotent orbit ${\rm Ad}(G) X$ is denoted by $\OC_X$.

For a $\g$ be a Lie algebra over $\R$ a subset $\{X,H,Y\} \subset \g$ is said to be {\it $\s\l_2(\R)$-triple} 
if $X \neq 0$, $[H, X] = 2X, [H,Y] = -2Y$ and $[X, Y] =H$. It is immediate that, if
$\{X,H,Y\} \subset \g$ is a  $\s\l_2(\R)$-triple then $\text{Span}_\R \{X,H,Y\}$ is a $\R$-subalgebra of $\g$ which is isomorphic to the Lie algebra $\s\l_2(\R)$.
We now recall the well-known Jacobson-Morozov theorem (see \cite[Theorem 9.2.1]{CM}) which ensures that if 
$X\in \g$ is a non-zero nilpotent element in a real semisimple Lie algebra  $\g$, then there exist $H,Y$ in $\g$ such that 
$\{X,H,Y\} \subset \g$ is a  $\s\l_2(\R)$-triple.

To facilitate the computations in \S \ref{exceptional} we need the following result.

\begin{theorem}\label{thm-nilpotent-orbit}
Let $G$ be an algebraic group defined over $\R$ which is $\R$-simple.
Let $X \in {\rm Lie}\, G(\R)$, $X \neq 0$ be a nilpotent element, and $\OC_X$ be the orbit of $X$ under the adjoint action
of the identity component $G (\R)^\circ$ on ${\rm Lie}\, G(\R)$. Let $\{X,H,Y\}$ be a $\s\l_2(\R)$-triple in $ {\rm Lie}\, G(\R)$. Let $K$ be a maximal compact subgroup in $\ZC_{G(\R)^ \circ}(X,H,Y)$, and  $M$ be a maximal compact subgroup in $G(\R)^\circ$ containing $K$. 
Then
  $$ 
  { H}^2(\OC_X, \R)  \simeq [(\z(\k) \cap [\m,\m])^*]^{K/K^{\circ}}.
  $$
In particular, $\dim_\R  H^2(\OC_X, \R) \leq \dim_\R \z(\k)$.
\end{theorem}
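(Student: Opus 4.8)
The plan is to replace $\OC_X$ by a compact homogeneous space, compute the latter's second cohomology by a spectral sequence argument, and then read off the answer from the structure of the centralizer. First I would identify $\OC_X\cong G(\R)^\circ/\ZC_{G(\R)^\circ}(X)$ and invoke the $\s\l_2$-theory of nilpotent elements: from the Jacobson--Morozov setup the full centralizer factors as a semidirect product $\ZC_{G(\R)^\circ}(X)=L\ltimes U$, where $L=\ZC_{G(\R)^\circ}(X,H,Y)$ is reductive and $U$ is unipotent, hence contractible. Since $U$ is contractible, the projection $G(\R)^\circ/L\to\OC_X$ is a homotopy equivalence, so $H^2(\OC_X,\R)\cong H^2(G(\R)^\circ/L,\R)$. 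Two further Cartan-type retractions finish the reduction: the fibration $L/K\to G(\R)^\circ/K\to G(\R)^\circ/L$ has contractible fibre $L/K$ (the symmetric space of the reductive group $L$, diffeomorphic to a Euclidean space even when $L$ is disconnected), while the fibration $M/K\to G(\R)^\circ/K\to G(\R)^\circ/M$ has contractible base $G(\R)^\circ/M$. Chaining these gives a homotopy equivalence $\OC_X\simeq M/K$, whence $H^2(\OC_X,\R)\cong H^2(M/K,\R)$.

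Next I would pass to the identity component. The quotient map $M/K^\circ\to M/K$ is a finite regular covering with deck group $K/K^\circ$; as this group is finite, averaging over it gives $H^2(M/K,\R)\cong H^2(M/K^\circ,\R)^{K/K^\circ}$, with $K/K^\circ$ acting through the adjoint action on $\z(\k)$ (note that the connected group $K^\circ$ acts trivially on $\z(\k)$). It therefore suffices to prove $H^2(M/K^\circ,\R)\cong(\z(\k)\cap[\m,\m])^*$ as a $K/K^\circ$-module. To this end I would run the five-term exact sequence of the Serre spectral sequence for the principal bundle $K^\circ\to M\to M/K^\circ$,
$$0\to H^1(M/K^\circ)\to H^1(M)\xrightarrow{\,i^*\,}H^1(K^\circ)\to H^2(M/K^\circ)\xrightarrow{\,\pi^*\,}H^2(M);$$
here the local system is trivial because the holonomy of a principal bundle with connected structure group acts on the fibre by translations, which are homotopic to the identity. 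The decisive input now is that $M$ is a maximal compact subgroup of the $\R$-\emph{simple} group $G(\R)^\circ$, so its symmetric space $G(\R)^\circ/M$ is either of non-Hermitian type, giving $\z(\m)=0$, or of Hermitian type, giving $\dim_\R\z(\m)=1$. In either case $\dim_\R\z(\m)\le 1$, so for the compact connected group $M$ one has $H^1(M,\R)\cong\z(\m)^*$ and $H^2(M,\R)\cong\Lambda^2\z(\m)^*=0$. With $H^2(M,\R)=0$ the sequence collapses to $H^2(M/K^\circ,\R)\cong\mathrm{coker}\,(i^*\colon\z(\m)^*\to\z(\k)^*)$.

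It remains to identify this cokernel. Under the identifications $H^1(M,\R)\cong(\m/[\m,\m])^*$ and $H^1(K^\circ,\R)\cong(\k/[\k,\k])^*$ the map $i^*$ is restriction of linear functionals. A functional on $\k$ vanishing on $[\k,\k]$ lies in the image of $i^*$ exactly when it vanishes on $[\m,\m]\cap\k$, since such a functional then factors through the inclusion $\k/([\m,\m]\cap\k)\hookrightarrow\m/[\m,\m]$ and hence extends to $\m$. Therefore $\mathrm{coker}\,(i^*)\cong\big(([\m,\m]\cap\k)/[\k,\k]\big)^*$. Finally, because $[\k,\k]\subseteq[\m,\m]$, writing $\k=\z(\k)\oplus[\k,\k]$ and extracting the central component of any vector in $[\m,\m]\cap\k$ yields the direct-sum decomposition $[\m,\m]\cap\k=(\z(\k)\cap[\m,\m])\oplus[\k,\k]$, so that $([\m,\m]\cap\k)/[\k,\k]\cong\z(\k)\cap[\m,\m]$. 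This gives $H^2(M/K^\circ,\R)\cong(\z(\k)\cap[\m,\m])^*$; taking $K/K^\circ$-invariants and tracing through the isomorphisms (which are all natural for the adjoint action) proves the displayed formula. The ``in particular'' clause is then immediate, since passing to invariants and intersecting with $[\m,\m]$ can only decrease dimension, giving $\dim_\R H^2(\OC_X,\R)\le\dim_\R(\z(\k)\cap[\m,\m])\le\dim_\R\z(\k)$.

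I expect the main obstacle to lie in the first paragraph: establishing the homotopy equivalence $\OC_X\simeq M/K$ rigorously, that is, justifying the semidirect-product structure of $\ZC_{G(\R)^\circ}(X)$ over $\R$ and checking that the three retractions are compatible with the residual $K/K^\circ$-action, so that the final isomorphism is genuinely $K/K^\circ$-equivariant. The cohomological computation, by contrast, hinges only on the clean structural fact $\dim_\R\z(\m)\le 1$, which forces $H^2(M,\R)=0$ and is precisely what makes the simple closed form possible.
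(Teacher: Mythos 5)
Your proposal is correct and follows exactly the route the paper itself indicates: the reduction $\OC_X \simeq M/K$ via the Levi-type decomposition $\ZC_{G(\R)^\circ}(X) = \ZC_{G(\R)^\circ}(X,H,Y)\ltimes U$ of \cite[Lemma 3.7.3]{CM}, followed by a computation of $H^2(M/K,\R)$ for disconnected isotropy $K$ (transfer to $M/K^\circ$, five-term exact sequence, and the vanishing $H^2(M,\R)=0$ forced by $\dim_\R\z(\m)\leq 1$), which is precisely the asserted generalization of \cite[Theorem 3.3]{BC}. Since the paper defers all details of this argument to a companion work, your writeup essentially supplies them; the one step you also leave at the level of ``to be traced through''---the $K/K^{\circ}$-equivariance of the identification $H^2(M/K^\circ,\R)\simeq(\z(\k)\cap[\m,\m])^*$---is a genuine but standard verification and not a gap in the strategy.
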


The above theorem follows from \cite[Lemma 3.7.3]{CM} and 
a description of the second cohomology groups of homogeneous spaces which generalizes \cite[Theorem 3.3]{BC}. The details of the proof of Theorem
\ref{thm-nilpotent-orbit} and the generalization of \cite[Theorem 3.3]{BC}, mentioned above, will appear elsewhere.

\section{The second cohomology groups of nilpotent orbits}
\label{exceptional}

In this section we study the second cohomology of the nilpotent orbits in non-compact non-complex exceptional
Lie algebras over $\R$. The results in this section depend on the results of \cite[Tables VI-XV]{D1}, \cite[Tables VII-VIII]{D2} and \cite[Tables 1-12]{K}.
We refer to \cite[Chapter 9]{CM}, \cite{D1} and \cite{D2} for the generalities required in this section.
We begin by recalling the  parametrization of nilpotent orbits in this set-up.

\subsection{Parametrization of nilpotent orbits in exceptional Lie algebras}\label{parametrization-exceptional}
We follow the parametrization of nilpotent orbits in non-compact non-complex 
exceptional Lie algebras as given in \cite[Tables VI-XV]{D1} and \cite[Tables VII-VIII]{D2}.
We consider the nilpotent orbits in $\g$ under the action of ${\rm Int}\,\g$, where
$\g$ is a non-compact non-complex real exceptional Lie algebra.
We fix a semisimple algebraic group $G$ defined over $\R$ such that $\g = {\rm Lie} (G(\R))$.
Here $G(\R)$ denotes the associated real semisimple Lie group of the $\R$-points of $G$. Let $G (\C)$ be the associated complex semisimple Lie group consisting of the $\C$-points of $G$. It is easy to see that 
orbits in $\g$ under the action of ${\rm Int}\,\g$ are the same as the orbits in $\g$ under the action of $G(\R)^\circ$.
Thus in this set-up, for a nilpotent element $X \in \g$, we set $ \OC_X := \{ {\rm Ad}(g) X \mid g \in G(\R)^\circ \}$. Let $\g = \m + \p$ be a Cartan decomposition and $\theta$ be the corresponding Cartan involution. Let $\g_\C$ be the Lie algebra of $G(\C)$. Then $\g_\C$ can be identified with the complexification of $\g$. Let $\m_\C, \p_\C$ be the $\C$-spans
of $\m$ and $\p$ in $\g_\C$, respectively. Then $\g_\C= \m_\C + \p_\C$. Let $M_\C$ be the connected subgroup of $G (\C)$ with Lie algebra $\m_\C$. 
Recall that, if $\g$ is as above and $\g$ is different from both $E_{6(-26)}$ and $E_{6(6)}$, then $\g$ is of inner type, or equivalently, ${\rm rank}\, \m_\C = {\rm rank }\, \g_\C$.
When $\g$ is of inner type, the nilpotent orbits are parametrized by a finite sequence of integers of length $l$ where $l:= {\rm rank}\, \m_\C = {\rm rank }\, \g_\C$.
When $\g$ is not of inner type, that is, when $\g$ is either $E_{6(-26)}$ or $E_{6(6)}$, then the nilpotent orbits are parametrized by a finite sequence of integers of length $4$.

Let $X'\in \g$ be a nonzero nilpotent element, and $\{X', H', Y' \} \subset \g$ be a $\s\l_2(\R)$-triple. Then $\{X', H', Y' \}$ is $G(\R)$-conjugate to another $\s\l_2(\R)$-triple $\{ \widetilde X,\widetilde H,\widetilde Y \}$ in $\g$ such that $\theta (\widetilde H) = - \widetilde H$, $\theta (\widetilde X) = - \widetilde Y$. Set $E := (\widetilde H - i (\widetilde X + \widetilde Y))/2$, $F := (\widetilde H + i (\widetilde X + \widetilde Y))/2$ and $H := i (\widetilde X - \widetilde Y)$. 
Then $\{E,H,F \}$ is a $\s\l_2(\R)$-triple and 
$E,F \in \p_\C$ and $H \in \m_\C$. The $\s\l_2(\R)$-triple $\{E,H,F \}$ is then called a
$\p_\C$-{\it Cayley triple associated to $X'$}.

\subsubsection{Parametrization in exceptional Lie algebras of inner type}\label{parametrization-inner-type}
We now recall from  \cite[Column 2, Tables VI-XV]{D1}
the parametrization of non-zero nilpotent orbits in $\g$  when $\g$ is an exceptional Lie algebra of inner type.
Let $\h_\C \subset \m_\C$ be a Cartan subalgebra of $\m_\C$ such that $\h_\C \cap \m$ is a Cartan subalgebra of $\m$. As $\g$ is of inner type, $\h_\C$ is a Cartan subalgebra of $\g_\C$. 
Set $\h: = \h_\C \cap i\m$. Let $R, R_0$ be the root systems of $(\g_\C, \h_\C), (\m_\C, \h_\C)$, respectively. Let $B:= \{\alpha_1, \dotsc, \alpha_l \}$ be a basis of $R$. 
Let $B_e:= B \cup \{\alpha_0 \}$ where $\alpha_0$ is the negative of the highest root of $(R, B)$.
Then there exists an unique basis of $R_0$, say $B_0$, such that $B_0 \subset B_e$.
Let $C_0$ be the closed Weyl chamber of $R_0$ in $\h$ corresponding to the basis $B_0$.
Let $l_0$ be the rank of $[\m_\C, \m_\C]$. Then either $l_0 = l$ or $l_0=l-1$.  If $l_0= l$ we set $B'_0 := B_0$. If $l_0=l-1$ (in this case we have $B_0 \subset B$) we set $B'_0 := B$. Clearly, $\# B'_0 = l$. We enumerate $B'_0 :=  \{\beta_1, \dotsc , \beta_l \}$ as in \cite[7, p. 506 and Table IV]{D1}.
Let $X\in \g$ be a nonzero nilpotent element, and $\{E,H,F\}$ be a $\p_\C$-Cayley triple (in $\g_\C$) associated to $X$.
Then  ${\rm Ad}(M_\C ) H \cap C_0$ is a singleton set, say $\{ H_0\}$. The element $H_0$ is called {\it the characteristic} of the orbit ${\rm Ad}(M_\C ) E$ as it determines the orbit $M_\C \cdot E$ uniquely. Consider the map from the set of nilpotent orbits in $\g$ to the set of integer sequences of length $l$, which assigns the sequence $\beta_1(H_0), \dotsc , \beta_l(H_0)$ to each nilpotent orbits $\OC_X$.
In view of the Kostant-Sekiguchi theorem (cf. \cite[Theorem 9.5.1]{CM}), this gives a bijection between the set of nilpotent orbits in $\g$ and 
the set of finite sequences of the form $\beta_1(H_0), \dotsc , \beta_l(H_0)$ as above. We use this 
parametrization while dealing with nilpotent orbits in exceptional Lie algebras of inner type.

\subsubsection{\texorpdfstring{Parametrization in $E_{6(-26)}$ or $E_{6(6)}$}{Lg}}\label{parametrization-outer-type}

We now recall from \cite[Column 1, Tables VII-VIII]{D2} 
the parametrization of non-zero nilpotent orbits in $\g$  when $\g$ is either  
$E_{6(-26)}$ or $E_{6(6)}$. We need a piece of notation here : henceforth, for a Lie algebra $\a$ over $\C$ and an automorphism $\sigma \in {\rm Aut}_\C \, \a$, the Lie
subalgebra consisting of the fixed points of $\sigma$ in $\a$, is denoted by $\a^\sigma$.     
Let now $\h_\C$ be a Cartan subalgebra of $\g_\C$ (we point out the difference of our notation with that in \cite{D2}; 
$\g$ and $\h$ of \cite[\S 1]{D2} are denoted here by $\g_\C$ and $\h_\C$, respectively).

Let $\g = E_{6(-26)}$.
Let $\tau$ be the involution of $\g_\C$ as defined in \cite[p. 198 ]{D2}  which keeps $\h_\C$ invariant.
Then the subalgebra $\g^{\tau}_\C$ is of type $F_4$, and $\h^{\tau}_\C$ is a Cartan subalgebra of $\g^{\tau}_\C$.
Let $G(\C)^{\tau}$ be the connected Lie subgroup of $G(\C)$ with Lie algebra $\g^\tau_\C$. Let $\{\beta_1,\beta_2,\beta_3, \beta_4\}$ be the  
simple roots of $(\g^{\tau}_\C,  \h^{\tau}_\C)$ as defined in \cite[(1), p. 198]{D2}.
Let $X\in E_{6(-26)}$ be a nonzero nilpotent element. Let $\{E,H,F\}$ be a $\p_\C$-Cayley triple (in $\g_\C$) associated to $X$. 
Then $H \in \g^{\tau}_\C$ and $E,F \in \g^{-\tau}_\C$. We may further assume that $H \in \h^{\tau}_\C$. Then the finite sequence of integers $\beta_1(H), \beta_2(H),\beta_3(H), \beta_4(H)$ determine the orbit ${\rm Ad}(G(\C)^{\tau}) \, E$ uniquely; see \cite[p. 204]{D2}.

Let $\g = E_{6(6)}$.
Let $\tau'$  be the involution of $\g_\C$ as defined in \cite[p. 199 ]{D2} which keeps $\h_\C$ invariant.
Then the subalgebra $\g^{\tau'}_\C$ is of type $C_4$, and $\h^{\tau'}_\C$ is a Cartan subalgebra of $\g^{\tau'}_\C$.
Let $G(\C)^{\tau'}$ be the connected Lie subgroup of $G(\C)$ with Lie algebra $\g^{\tau'}_\C$.
Let $\{\beta_0, \beta_1,\beta_2,\beta_3\}$ be the  
simple roots of $(\g^{\tau'}_\C, \h^{\tau'}_\C)$ as defined in \cite[p. 199]{D2}.
Let $X\in E_{6(6)}$ be a nonzero nilpotent element. Let $\{E,H,F\}$ be a $\p_\C$-Cayley triple (in $\g_\C$) associated to $X$.
Then  $H \in \g^{\tau'}_\C$ and $E,F \in \g^{-\tau'}_\C$. We may further assume that $H \in \h^{\tau'}_\C$. It then follows that the finite sequence of integers $\beta_0(H), \beta_1(H), \beta_2(H), \beta_3(H)$ determine  the orbit ${\rm Ad} (G(\C)^{\tau'}) \, E$ uniquely; see \cite[p. 204]{D2}.

\subsection{Nilpotent orbits of three types}
For the sake of convenience of writing the proofs that appear in the later part \S \ref{exceptional}, it will be useful to divide the nilpotent orbits in the following three types. 
Let $X\in \g$ be a nonzero nilpotent element, and $\{X,H,Y\}$ be a $\s\l_2(\R)$-triple in $\g$. 
Let $G$ be as in the beginning of \S \ref{parametrization-exceptional}. 
Let $K$ be a maximal compact subgroup in $\ZC_{G(\R)^\circ}(X,H,Y)$, and $M$ be a maximal compact subgroup in $G(\R)^\circ$ containing $K$. 
A nonzero nilpotent orbit $\OC_X$ in $\g$ is said to be of 
\begin{enumerate}
 \item  {\it type I} if $\z(\k) \neq 0$, $K/K^\circ = {\rm Id}$ and  $\m= [\m,\m]$;
 
 \item  {\it type II } if either $\z(\k) \neq 0$, $K/K^\circ \neq {\rm Id}$, $\m= [\m,\m]$; or $\z(\k) \neq 0$, $\m \neq [\m,\m]$;
 
 \item  {\it type III } if $\z(\k) = 0$.
\end{enumerate}

 In what follows we will use the next result repeatedly.

\begin{corollary}\label{cor-dim-types-ABC}
 Let $\g$ be a real simple non-compact exceptional Lie algebra. Let $X \in \g$ be a nonzero nilpotent element.
 \begin{enumerate}
  \item  If the orbit $\OC_X$ is of type I, then $\dim_\R  H^2(\OC_X, \R) = \dim_\R \z(\k)$.
  
  \item  If the orbit $\OC_X$ is of type II, then $\dim_\R  H^2(\OC_X, \R) \leq \dim_\R \z(\k)$.
  
  \item  If the orbit $\OC_X$ is of type III, then  $\dim_\R  H^2(\OC_X, \R) =0$.
 \end{enumerate}
\end{corollary}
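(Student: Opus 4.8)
The plan is to deduce all three statements directly from Theorem~\ref{thm-nilpotent-orbit}, which supplies the isomorphism
$$H^2(\OC_X, \R) \simeq [(\z(\k) \cap [\m,\m])^*]^{K/K^{\circ}}$$
together with the unconditional bound $\dim_\R H^2(\OC_X, \R) \leq \dim_\R \z(\k)$. No further geometric input is needed: the argument is purely a matter of unwinding the defining conditions of the three types against these two facts.

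For type III I would simply invoke the bound. Since $\z(\k) = 0$ by definition, one obtains $\dim_\R H^2(\OC_X, \R) \leq \dim_\R \z(\k) = 0$, which forces $H^2(\OC_X, \R) = 0$. For type II there is nothing to prove beyond citing the theorem: the inequality $\dim_\R H^2(\OC_X, \R) \leq \dim_\R \z(\k)$ is exactly the concluding clause of Theorem~\ref{thm-nilpotent-orbit}, and that clause holds for every nilpotent orbit irrespective of type, so both defining alternatives of type II are covered at once.

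For type I I would use both of its defining hypotheses in conjunction with the isomorphism. First, since $K$ is a subgroup of $M$ we have $\k \subseteq \m$, and hence $\z(\k) \subseteq \m$; combined with the hypothesis $\m = [\m,\m]$ this gives $\z(\k) \cap [\m,\m] = \z(\k)$. Second, the hypothesis $K/K^\circ = {\rm Id}$ means $K$ is connected, so the group $K/K^\circ$ acts trivially and the fixed-point space $[(\z(\k))^*]^{K/K^\circ}$ equals all of $\z(\k)^*$. Feeding both observations into the isomorphism yields $H^2(\OC_X, \R) \simeq \z(\k)^*$, whence $\dim_\R H^2(\OC_X, \R) = \dim_\R \z(\k)$.

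The only step that warrants a moment's care, and the closest thing here to an obstacle, is the inclusion chain $\z(\k) \subseteq \k \subseteq \m$ that legitimizes replacing $\z(\k) \cap [\m,\m]$ by $\z(\k)$ once $\m = [\m,\m]$ is assumed; everything else is an immediate reading of Theorem~\ref{thm-nilpotent-orbit}, so I expect the proof to be very short.
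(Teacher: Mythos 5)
Your proposal is correct and is exactly the argument the paper intends: the paper's proof is the single line ``follows immediately from Theorem \ref{thm-nilpotent-orbit},'' and your unwinding of the three cases (the unconditional bound for types II and III, and the identifications $\z(\k)\cap[\m,\m]=\z(\k)$ and $[(\z(\k))^*]^{K/K^\circ}=\z(\k)^*$ for type I) is precisely the omitted routine verification. No discrepancy in approach or substance.
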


\begin{proof}
 The proof of the corollary follows immediately from Theorem \ref{thm-nilpotent-orbit}.
\end{proof}

Let $\g$ be as above.
In the proofs of our results in the following subsections we use the description of 
a Levi factor of $\z_{\g} (X)$ for each nilpotent element $X$ in $\g$,
as given in the last columns of \cite[Tables VI-XV]{D1} and \cite[Tables VII-VIII]{D2}. This enables us  
compute the dimensions $\dim_\R \z(\k)$ easily.  We also use \cite[Column 4, Tables 1-12]{K} for 
the component groups for each nilpotent orbits in $\g$.

\subsection{\texorpdfstring{Nilpotent orbits in the non-compact real form of $G_2$}{Lg}}
Recall that up to conjugation there is only one non-compact real form of $G_2$. We denote it by $G_{2(2)}$. 
There are only five nonzero nilpotent orbits in $G_{2(2)}$; see \cite[Table VI, p. 510]{D1}. Note that in this case we have $\m= [\m,\m]$.
\begin{theorem}\label{G-2(2)}
Let the parametrization of the nilpotent orbits be as in \S \ref{parametrization-inner-type}. 
 Let $X$ be a nonzero nilpotent element in $G_{2(2)}$. 
\begin{enumerate}
  \item If the parametrization  of the orbit $\OC_X$ is given by either   
   $1~1$ or $~1~3$, then $\dim_\R H^2(\OC_X, \R)$ $= 1$.
   
  \item If the parametrization of the orbit $\OC_X$ is given by any of 
  $ ~2~2,~ 0~4, ~ 4~8$, then $\dim_\R H^2(\OC_X, \R)$ $= 0$.
\end{enumerate}
\end{theorem}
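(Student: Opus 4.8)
The plan is to apply Theorem~\ref{thm-nilpotent-orbit} together with Corollary~\ref{cor-dim-types-ABC}, handling the five orbits one at a time. Since $\m = [\m,\m]$ for $G_{2(2)}$ and $\z(\k) \subseteq \k \subseteq \m$, the isomorphism of Theorem~\ref{thm-nilpotent-orbit} collapses to
$$
H^2(\OC_X, \R) \;\simeq\; [\z(\k)^*]^{K/K^\circ}.
$$
Consequently, for each orbit the whole computation rests on two pieces of data: the center $\z(\k)$ of a maximal compact subalgebra $\k$ of the reductive centralizer of the $\s\l_2(\R)$-triple, and the finite group $K/K^\circ$ together with its action on $\z(\k)$. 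I would read a Levi factor $\l$ of $\z_{\g}(X)$ off the last column of \cite[Table VI]{D1}, and the component group off \cite[Tables 1--12]{K}.

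From $\l$ one extracts $\z(\k)$ structurally. Every reductive centralizer occurring in $G_2$ is either trivial or semisimple of type $A_1$, so $\z(\l) = 0$ throughout, and $\z(\k) \neq 0$ can arise only through an $\s\l_2(\R)$ summand, whose maximal compact subalgebra $\s\o(2)$ is one-dimensional. A compact summand such as $\s\u(2)$, or a trivial reductive centralizer, contributes nothing to $\z(\k)$. I expect the two orbits labelled $1\,1$ and $1\,3$ to have $\l \cong \s\l_2(\R)$, so that $\dim_\R \z(\k) = 1$; the regular orbit $4\,8$ to have trivial reductive centralizer; and the remaining orbits to have either $\z(\k) = 0$ or a component group acting on $\z(\k)$ without nonzero invariants.

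The classification into types is then mechanical. For $1\,1$ and $1\,3$, once one confirms from \cite{K} that $K/K^\circ$ is trivial, the orbits are of type I and Corollary~\ref{cor-dim-types-ABC}(1) gives $\dim_\R H^2(\OC_X,\R) = \dim_\R \z(\k) = 1$. For every orbit among $2\,2,\ 0\,4,\ 4\,8$ with $\z(\k) = 0$ the orbit is of type III and part~(3) of the corollary gives $\dim_\R H^2(\OC_X,\R) = 0$. If some orbit in this second group instead has $\z(\k) \neq 0$, it is of type II, and here the crude bound $\dim_\R H^2 \le \dim_\R \z(\k)$ is not enough: one must compute the invariant space $[\z(\k)^*]^{K/K^\circ}$ exactly and show it vanishes.

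The main obstacle is precisely this potential type II case. When $\z(\k) \cong \R$ yet $\dim_\R H^2(\OC_X,\R) = 0$, the generator of the nontrivial group $K/K^\circ$ must act on the line $\z(\k)$ by $-1$, leaving no nonzero fixed vector. To establish this I would trace, through \cite[Table VI]{D1} and \cite[Tables 1--12]{K}, how $K/K^\circ$ embeds in $K$ and how its generator acts on the circle factor $\exp(\z(\k))$, and verify that this action is by inversion; this is the only step that genuinely uses the full statement of Theorem~\ref{thm-nilpotent-orbit} rather than its dimension bound. The accompanying bookkeeping task on which the entire case distinction turns is to read the real form of each reductive centralizer correctly, that is, to distinguish the split $\s\l_2(\R)$ from the compact $\s\u(2)$, from the tables.
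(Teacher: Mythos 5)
Your proposal is correct and follows essentially the same route as the paper: read $\dim_\R \z(\k)$ from Djokovic's Table VI and $K/K^\circ$ from King's Table 1, classify the orbits $1\,1$, $1\,3$ as type I and $2\,2$, $0\,4$, $4\,8$ as type III, and apply Corollary \ref{cor-dim-types-ABC}. The type II contingency you prepare for never materializes --- the tables give $\z(\k)=0$ for all three orbits in part (2) --- so no analysis of the component-group action on $\z(\k)$ is needed, exactly as in the paper's proof.
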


\begin{proof}
 From \cite[Column 7, Table VI, p. 510]{D1} we have $\dim_\R \z(\k) =1$ and from \cite[Column 4, Table 1, p. 247]{K} 
 we have $K/K^\circ= {\rm Id}$ for the nilpotent orbits as in (1). Thus these are of type I. 
 We refer to \cite[Column 7, Table VI, p. 510]{D1} for the orbits as given in (2). These orbits are of type III as $\dim_\R \z(\k) = 0$.  
 In view of the Corollary \ref{cor-dim-types-ABC} the conclusions follow.
\end{proof}

\subsection{\texorpdfstring{Nilpotent orbits in non-compact real forms of $F_4$}{Lg}}

Recall that up to conjugation there are two non-compact real forms of $F_4$. They are denoted by $F_{4(4)}$ and $F_{4(-20)}$.

\subsubsection{\texorpdfstring {Nilpotent orbits in $F_{4(4)}$.}{Lg}}
There are 26 nonzero nilpotent orbits in $F_{4(4)}$; see \cite[Table VII, p. 510]{D1}. Note that in this case we have $\m=[\m,\m]$.
\begin{theorem}\label{F-4(4)}
Let the parametrization of the nilpotent orbits be as in \S \ref{parametrization-inner-type}.
 Let $X$ be a nonzero nilpotent element in $F_{4(4)}$.
 \begin{enumerate}
  \item Assume the parametrization of the orbit $\OC_X$ is given by any of the sequences :\\
  $001~1,~ 001~3,~ 110~2,~ 111~1,~ 131~3$.
  Then  $\dim_\R H^2(\OC_X, \R)= 1$. \vspace{.1cm}
  
  \item  Assume the parametrization of the orbit $\OC_X$ is given by any of the sequences :\\
   $100~2,~ 200~0,~ 103~1,~ 111~3,~ 204~4$. Then  
  $\dim_\R H^2(\OC_X, \R)\leq  1$.  \vspace{.1cm}
  
  \item  If the parametrization  of the orbit $\OC_X$ is either $101~1$ or $012~2$, 
         then  $\dim_\R H^2(\OC_X, \R)\leq  2$.  \vspace{.1cm}
  
  \item  If $\OC_X$ is not given by the parametrizations as in (1), (2), (3) above (\# of such orbits are 14),
   then we have $\dim_\R H^2(\OC_X, \R)= 0$.
\end{enumerate}
\end{theorem}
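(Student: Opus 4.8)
The plan is to classify each of the listed nilpotent orbits into one of the three types (I, II, III) defined above, and then invoke Corollary \ref{cor-dim-types-ABC} to read off the dimension or the upper bound. Since $F_{4(4)}$ is of inner type with $\m = [\m,\m]$, the splitting into types is governed entirely by two invariants attached to each orbit: the dimension $\dim_\R \z(\k)$ and the component group $K/K^\circ$. The first I would extract from the Levi-factor data in \cite[Column 7, Table VII, p.~510]{D1}, and the second from \cite[Column 4, Table 2]{K}. The whole proof is therefore a bookkeeping argument: match each parametrizing sequence across the two tables, compute the two invariants, and conclude.

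**Execution, type by type.** First I would handle part (4), the vanishing cases. For each of these 14 orbits I would verify from Dokovic's table that the Levi factor of $\z_\g(X)$ is semisimple, so that $\z(\k) = 0$; such orbits are of type III, and Corollary \ref{cor-dim-types-ABC}(3) gives $H^2(\OC_X,\R) = 0$. Next, for part (1), I would check that each listed sequence has $\dim_\R \z(\k) = 1$ and, crucially, $K/K^\circ = \mathrm{Id}$; combined with $\m = [\m,\m]$, this is exactly the definition of type I, so Corollary \ref{cor-dim-types-ABC}(1) yields the equality $\dim_\R H^2(\OC_X,\R) = \dim_\R \z(\k) = 1$. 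For parts (2) and (3) the component group is nontrivial: I would confirm $\dim_\R \z(\k) = 1$ (resp.\ $\dim_\R \z(\k) = 2$) together with $K/K^\circ \neq \mathrm{Id}$, placing these orbits in type II, whence Corollary \ref{cor-dim-types-ABC}(2) gives only the upper bounds $\dim_\R H^2(\OC_X,\R) \leq 1$ (resp.\ $\leq 2$).

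**The main subtlety.** The genuinely delicate point is the passage from the raw table data to the invariant $\dim_\R \z(\k)$. What the tables record is a Levi factor $\l$ of the centralizer $\z_\g(X)$, not the reductive centralizer $\z_{\g}(X,H,Y)$ of the full triple directly; I would use that a Levi factor of $\z_\g(X)$ is isomorphic (as a reductive Lie algebra) to $\z_\g(X,H,Y)$, so that $\dim_\R \z(\k)$ equals the dimension of the center of the maximal compact subalgebra $\k$ of this reductive centralizer. Reading off $\dim_\R \z(\k)$ then amounts to identifying the compact real form data of $\l$ and counting its central (abelian) factors — typically the number of $\mathfrak{u}(1)$ or torus summands surviving in a maximal compact subalgebra. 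This is the step where an error in transcribing a table entry, or a miscount of central factors in a reductive Levi, would propagate directly into a wrong answer, so it demands the most care.

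**Consistency check.** As a sanity check I would confirm that the three lists together with the 14 type-III orbits exhaust all 26 nonzero nilpotent orbits of $F_{4(4)}$, and that no sequence appears in more than one part; this guards against omissions in the case analysis. Once the tabulation is complete and consistent, all four conclusions follow at once from Corollary \ref{cor-dim-types-ABC}.
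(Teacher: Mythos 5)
Your proposal is correct and follows essentially the same route as the paper: classify each orbit as type I, II, or III by reading $\dim_\R \z(\k)$ from Djokovic's table \cite[Table VII, p.~510]{D1} and $K/K^\circ$ from King's table \cite[Table 2]{K}, then apply Corollary \ref{cor-dim-types-ABC}. Your additional remarks on the Levi-factor subtlety and the exhaustiveness check are sensible safeguards but do not change the argument, which matches the paper's proof.
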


\begin{proof} 
For the Lie algebra $F_{4(4)}$, we can easily compute $\dim_\R \z(\k)$ from the last column of \cite[Table VII, p. 510]{D1} 
and $K/K^\circ$ from \cite[ Column 4, Table 2, pp. 247-248]{K}.

For the orbits $\OC_X$, as in (1), we have $\dim_\R \z(\k) = 1$ and $K/K^\circ = {\rm Id}$. Hence these are of type I.
For the orbits $\OC_X$, as in (2), we have $\dim_\R \z(\k) = 1$ and  $K/K^\circ \neq {\rm Id}$; hence they are of type II. 
For the orbits $\OC_X$, as in (3), we have $\dim_\R \z(\k) = 2$ and  $K/K^\circ \neq {\rm Id}$. Hence these are also of type II. 
The rest of the 14 orbits, which are not given by the parametrizations in (1), (2), (3), are of type III as $\z(\k) = 0$. 
Now the theorem follows from Corollary \ref{cor-dim-types-ABC}.  
\end{proof}

\subsubsection{\texorpdfstring {Nilpotent orbits in $F_{4(-20)}$.}{Lg}}
There are two nonzero nilpotent orbits in $F_{4(-20)}$; see \cite[Table VIII, p. 511]{D1}. 
\begin{theorem}\label{F-4(-20)} 
 For all the nilpotent elements $X$ in $F_{4(-20)}$ we have $\dim_\R H^2(\mathcal{O}_X,\R) = 0$.
\end{theorem}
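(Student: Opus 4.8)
The plan is to show that both of the two nonzero nilpotent orbits in $F_{4(-20)}$ are of type III; the assertion then follows at once from Corollary \ref{cor-dim-types-ABC}(3). Recall that a maximal compact subalgebra $\m$ of $F_{4(-20)}$ is isomorphic to $\mathfrak{so}(9)$, which is simple of type $B_4$, so in particular $\m = [\m,\m]$. Since $\k$ is the Lie algebra of a maximal compact subgroup $K$ of the reductive group $\ZC_{G(\R)^\circ}(X,H,Y)$ and sits inside $\m$, establishing that an orbit is of type III amounts to verifying that $\z(\k) = 0$.

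First I would read off, for each of the two nilpotent elements $X$, the reductive part of $\z_\g(X)$ from the last column of \cite[Table VIII, p. 511]{D1}; up to conjugacy this reductive Lie algebra is the Lie algebra of $\ZC_{G(\R)^\circ}(X,H,Y)$, and its maximal compact subalgebra is $\k$. Next I would identify $\k$ in each case and compute its center $\z(\k)$. I expect that in both cases the reductive centralizer carries a semisimple maximal compact subgroup with no central circle factor, so that $\dim_\R \z(\k) = 0$ and both orbits are of type III. Invoking Corollary \ref{cor-dim-types-ABC}(3) then yields $\dim_\R H^2(\OC_X, \R) = 0$ for every nonzero nilpotent $X \in F_{4(-20)}$.

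The only real obstacle is the bookkeeping involved in passing from the tabulated reductive part of $\z_\g(X)$ to the center of its maximal compact subalgebra $\k$, where one must take care that no compact central torus is overlooked. Since there are merely two orbits and all the required data are completely tabulated in \cite{D1}, this reduces to a short finite verification, and no genuine difficulty is anticipated.
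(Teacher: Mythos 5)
Your proposal is correct and follows essentially the same route as the paper: read off $\z(\k)$ from the last column of \cite[Table VIII, p.~511]{D1}, conclude both nonzero orbits are of type III, and apply Corollary \ref{cor-dim-types-ABC}(3). The only cosmetic difference is that the paper also explicitly notes the trivial case $X=0$ before restricting to nonzero orbits, which you leave implicit.
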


\begin{proof}
As the theorem follows trivially when $X =0$ we assume that $X \neq 0$. We follow the parametrization of nilpotent orbits as in \S \ref{parametrization-inner-type}.
From the last column of \cite[Table VIII, p. 511]{D1} we conclude that $\z(\k) = 0$. Hence the nonzero nilpotent orbits are of type III.
Using Corollary \ref{cor-dim-types-ABC} (3) we have $\dim_\R H^2(\mathcal{O}_X,\R) = 0$.
\end{proof}

\subsection{\texorpdfstring{Nilpotent orbits in non-compact real forms of $E_6$}{Lg}}

Recall that up to conjugation there are four non-compact real forms of $E_6$. They are denoted by  $E_{6(6)}$, $E_{6(2)}$, $E_{6(-14)}$ and $E_{6(-26)}$.

\subsubsection{\texorpdfstring {Nilpotent orbits in $E_{6(6)}$.}{Lg}}
There are 23 nonzero nilpotent orbits in $E_{6(6)}$; see \cite[Table VIII, p. 205]{D2}. Note that in this case we have $\m= [\m,\m]$.

\begin{theorem}\label{E-6(6)} 
Let the parametrization of the nilpotent orbits be as in \S \ref{parametrization-outer-type}.
Let $X$ be a nonzero nilpotent element in $E_{6(6)}$.
 \begin{enumerate}
 \item If the parametrization of the orbit $\OC_X$ is given by either    
  $1001$ or $1101$ or $1211$,
  then $\dim_\R H^2(\OC_X, \R) = 1$.
  
 \item 
Assume the parametrization of the orbit $\OC_X$ is given by any of the sequences :\\ 
  $0102, 0202, 1010$, $2002, 1011$.
  Then $\dim_\R H^2(\OC_X, \R) \leq 1$.
  
  \item  If $\OC_X$ is not given by the parametrizations as in (1), (2) above (\# of such orbits are 15),
   then we have $\dim_\R H^2(\OC_X, \R)= 0$.
 \end{enumerate}
\end{theorem}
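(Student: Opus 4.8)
The plan is to follow exactly the same template that was used for $G_{2(2)}$ and $F_{4(4)}$, since Theorem \ref{E-6(6)} has the identical structure: each nilpotent orbit must be classified as type I, II, or III, and then Corollary \ref{cor-dim-types-ABC} immediately yields the stated (in)equalities on $\dim_\R H^2(\OC_X,\R)$. The essential observation is that $E_{6(6)}$ satisfies $\m = [\m,\m]$ (as noted before the theorem), so the hypothesis $\m = [\m,\m]$ in the definitions of type I and type II is automatic here; thus the type of each orbit is decided solely by the two invariants $\dim_\R \z(\k)$ and the component group $K/K^\circ$.

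First I would assemble the two pieces of data for each of the $23$ nonzero orbits. The value of $\dim_\R \z(\k)$ is read off from the last column of \cite[Table VIII, p.\ 205]{D2}, which records a Levi factor of $\z_\g(X)$; from that reductive Levi factor one computes the dimension of the center of its maximal compact subalgebra $\k$. The component group $K/K^\circ$ is read off from \cite[Column 4, Tables 1-12]{K}. Note that $E_{6(6)}$ is \emph{not} of inner type, so the orbits are parametrized by sequences of length $4$ as in \S\ref{parametrization-outer-type}; one must therefore match the orbit labels across the Dokovi\'c tables in \cite{D2} and the component-group tables in \cite{K}, which is a bookkeeping task rather than a conceptual one.

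Then the sorting proceeds mechanically. For the three orbits in (1), namely $1001, 1101, 1211$, I expect to find $\dim_\R \z(\k) = 1$ and $K/K^\circ = {\rm Id}$, placing them in type I, so Corollary \ref{cor-dim-types-ABC}(1) gives $\dim_\R H^2(\OC_X,\R) = \dim_\R \z(\k) = 1$. For the five orbits in (2), namely $0102, 0202, 1010, 2002, 1011$, I expect $\dim_\R \z(\k) = 1$ but $K/K^\circ \neq {\rm Id}$, placing them in type II, so Corollary \ref{cor-dim-types-ABC}(2) gives the upper bound $\dim_\R H^2(\OC_X,\R) \leq 1$. The remaining $15$ orbits should have $\z(\k) = 0$, hence are type III, and Corollary \ref{cor-dim-types-ABC}(3) forces $\dim_\R H^2(\OC_X,\R) = 0$.

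The conceptual content is entirely carried by Theorem \ref{thm-nilpotent-orbit} and its Corollary \ref{cor-dim-types-ABC}; the only real obstacle is the accuracy of the table lookups and, in particular, the cross-referencing between Dokovi\'c's parametrization in \cite{D2} and King's component-group tables in \cite{K}, since the two sources may label the same orbit differently and one must be certain the rows are correctly aligned before assigning each orbit its type. I would double-check this alignment against dimension data or against the nilpotent-orbit count of $23$ to guard against a mismatch. Once the data is tabulated correctly, no further work is required: the theorem follows by direct appeal to Corollary \ref{cor-dim-types-ABC}.
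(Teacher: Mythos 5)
Your overall strategy is exactly the paper's: use $\m = [\m,\m]$ for $E_{6(6)}$, read $\dim_\R \z(\k)$ from the last column of \cite[Table VIII, p.~205]{D2} and $K/K^\circ$ from King's tables (specifically \cite[Column 4, Table 4, p.~253]{K}), sort the $23$ orbits into types I, II, III, and invoke Corollary \ref{cor-dim-types-ABC}. The expected data for parts (1) and (2) also match what the paper records ($\dim_\R \z(\k)=1$ with trivial, respectively nontrivial ($\Z_2$), component group).

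However, there is one concrete step where your plan, executed as stated, would fail: the orbit parametrized by $2000$, which belongs to the $15$ orbits of part (3). Row 5 of \cite[Table VIII, p.~205]{D2} --- the row for this orbit --- is known to be erroneous, as pointed out in the first paragraph of \cite[p.~254]{K}, and the paper's proof explicitly relies on King's correction to conclude $\z(\k)=0$ for this orbit. If you read the uncorrected Dokovi\'c entry, you would get $\z(\k)\neq 0$ and hence classify $2000$ as type I or type II; in the type I case Corollary \ref{cor-dim-types-ABC}(1) would give $\dim_\R H^2(\OC_X,\R)=1$, contradicting part (3), and in the type II case you would only get an upper bound of $1$, which does not establish the claimed equality $\dim_\R H^2(\OC_X,\R)=0$. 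Note that the safeguards you propose --- checking the alignment of labels between \cite{D2} and \cite{K}, or counting that there are $23$ orbits --- would not detect this problem, since it is not a mismatch of parametrizations but an incorrect entry in the source table itself; the fix requires knowing (or independently discovering) the erratum recorded in \cite{K}.
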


\begin{proof}
For the Lie algebra $E_{6(6)}$, we can easily compute $\dim_\R \z(\k)$ from the last column of \cite[Table VIII, p. 205]{D2} 
and $K/K^\circ$ from \cite[Column 4, Table 4, p.253]{K}. 
As pointed out in the $1^{{\rm st}}$ paragraph of \cite[p. 254]{K}, there is an error in row 5 of \cite[Table VIII, p. 205]{D2}. 
Thus when $\OC_X$ is given by the parametrization 2000 it follows from  \cite[p. 254]{K} that $\z(\k)=0$.

We have $\dim_\R \z(\k) =1$ and $K/K^\circ = {\rm Id}$ for the orbits given in (1). Thus these orbits are of type I.
For the orbits, as in (2), we have $\dim_\R \z(\k) = 1$ and  $K/K^\circ= \Z_2$. Hence, the orbits in (2) are of type II.  
For rest of the 15 nonzero nilpotent orbits, which are not given by the parametrizations of (1), (2), are of type III as $\dim_\R \z(\k) = 0$.
Now the results follow from Corollary \ref{cor-dim-types-ABC}. 
\end{proof}

\subsubsection{\texorpdfstring {Nilpotent orbits in $E_{6(2)}$.}{Lg}}
There are 37 nonzero nilpotent orbits in $E_{6(2)}$; see \cite[Table IX, p. 511]{D1}. Note that in this case we have $\m= [\m,\m]$.

\begin{theorem}\label{E-6(2)}
  Let the parametrization of the nilpotent orbits be as in \S \ref{parametrization-inner-type}. 
  Let $X$ be a nonzero nilpotent element in $E_{6(2)}$.
\begin{enumerate}
  \item 
  Assume the parametrization of the orbit $\OC_X$ is given by any of the sequences :\\
  $00000~4,~ 00200~2, ~02020~0, ~00400~8,~ 22222~2, ~04040~4,~ 44044~4,~ 44444~8$.\\
   Then $\dim_\R H^2(\OC_X, \R)= 0$. \vspace{.2cm}
   
  \item Assume the parametrization of the orbit $\OC_X$ is given by any of the sequences :\\
   $10001~2,~ 10101~1,~ 21001~1,~ 10012~1,~ 11011~2,~ 01210~2,~ 10301~1,~ 11111~3, ~ 22022~0$.\\
   Then $\dim_\R H^2(\OC_X, \R)= 2$.\vspace{.2cm}
 
 \item If the parametrization of the orbit $\OC_X$ is given by either  
 $20002~0 $ or $ 00400~0$ or $ 02020~4$, 
  then $\dim_\R H^2(\OC_X, \R) \leq 2$. \vspace{.2cm}
 
 \item If the parametrization of the orbit $\OC_X$ is given by $20202~2$, 
 then $\dim_\R H^2(\OC_X, \R) \leq 1$. \vspace{.2cm}
 
 \item  If $\OC_X$ is not given by the parametrizations as in (1), (2), (3), (4) above (\# of such orbits are 16),
        then we have $\dim_\R H^2(\OC_X, \R)= 1$.
\end{enumerate}
\end{theorem}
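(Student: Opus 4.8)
The plan is to follow verbatim the strategy already used in the proofs of Theorems \ref{G-2(2)}, \ref{F-4(4)} and \ref{E-6(6)}: for each of the $37$ nonzero nilpotent orbits one reads off the two invariants $\dim_\R \z(\k)$ and the component group $K/K^\circ$, sorts the orbit into one of the three types introduced above, and then applies Corollary \ref{cor-dim-types-ABC}. Because $E_{6(2)}$ satisfies $\m = [\m,\m]$, the type of an orbit is governed solely by whether $\z(\k)$ vanishes and, when it does not, by whether $K/K^\circ$ is trivial.

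First I would extract $\dim_\R \z(\k)$ for each orbit from the last column of \cite[Table IX, p.~511]{D1}, which records a Levi factor of $\z_\g(X)$; the dimension of the center of that Levi factor equals $\dim_\R \z(\k)$. In parallel I would read off $K/K^\circ$ from \cite[Column 4, Table 5]{K}. The eight orbits listed in (1) are exactly those whose recorded Levi factor is semisimple, so $\z(\k) = 0$, they are of type III, and Corollary \ref{cor-dim-types-ABC}(3) gives $\dim_\R H^2(\OC_X,\R) = 0$. For all the remaining orbits $\z(\k) \neq 0$, so only the dichotomy on $K/K^\circ$ remains.

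Next I would treat the orbits with $\z(\k) \neq 0$, separating them by the value $\dim_\R \z(\k) \in \{1,2\}$ and by the triviality of $K/K^\circ$. The nine orbits in (2) have $\dim_\R \z(\k) = 2$ and $K/K^\circ = {\rm Id}$, hence are of type I, and Corollary \ref{cor-dim-types-ABC}(1) gives the exact value $2$; the three orbits in (3) have $\dim_\R \z(\k) = 2$ but $K/K^\circ \neq {\rm Id}$, hence are of type II, yielding only the bound $\leq 2$ via part (2). In the same way the single orbit in (4) has $\dim_\R \z(\k) = 1$ with $K/K^\circ \neq {\rm Id}$ (type II, bound $\leq 1$), while the remaining $16$ orbits in (5) have $\dim_\R \z(\k) = 1$ with $K/K^\circ = {\rm Id}$ (type I, exact value $1$). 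A sanity check on the total, $8+9+3+1+16 = 37$, confirms that all orbits are accounted for.

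The cohomological input is thus immediate from Corollary \ref{cor-dim-types-ABC}, and the only genuinely delicate part is the orbit-by-orbit bookkeeping: matching the parametrization of \cite{D1} against the component-group data of \cite{K} across all $37$ rows, while watching for the kind of tabular discrepancy already flagged in the proof of Theorem \ref{E-6(6)}. The entire statement hinges on correctly distinguishing the two strata with $\dim_\R \z(\k)=2$ (cases (2) versus (3)) and the two strata with $\dim_\R \z(\k)=1$ (cases (5) versus (4)), a distinction that rests entirely on which component groups are trivial; so careful cross-checking against both tables is where the real effort lies, whereas the passage from type to cohomology dimension is purely formal.
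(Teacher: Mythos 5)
Your overall framework coincides with the paper's: read off $\dim_\R \z(\k)$ and $K/K^\circ$ for each of the $37$ orbits, sort them into types I, II, III, and invoke Corollary \ref{cor-dim-types-ABC}; your type assignments for groups (1)--(5) and the count $8+9+3+1+16=37$ agree with the paper's proof. The genuine gap lies in the rule you propose for extracting $\dim_\R \z(\k)$ from Djokovic's table. You claim that $\dim_\R \z(\k)$ equals the dimension of the center of the Levi factor of $\z_\g(X)$ recorded in the last column, and in particular that the orbits in (1) are precisely those with semisimple Levi factor. This is false: $\k$ is the Lie algebra of a maximal compact subgroup $K$ of $\ZC_{G(\R)^\circ}(X,H,Y)$, so $\z(\k)$ is the center of a \emph{maximal compact subalgebra} of the Levi factor, not the center of the Levi factor itself. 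Every non-compact simple factor of Hermitian type, such as $\s\l_2(\R)$, $\s\u(p,q)$ or $\s\o(2,n)$, contributes $1$ to $\dim_\R \z(\k)$ while contributing $0$ to the center of the Levi factor; conversely, a non-compact (split) central factor $\R$ contributes $1$ to the center of the Levi factor but $0$ to $\z(\k)$. So the two numbers you identify are different in general, in both directions.

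This failure is not hypothetical; it already contradicts results inside this paper. For the minimal orbit $1~1$ of $G_{2(2)}$ the Levi factor is a real form of $\s\l_2(\C)$, and since Theorem \ref{G-2(2)} records $\dim_\R \z(\k)=1$ it must be $\s\l_2(\R)$: a semisimple Levi factor whose maximal compact subalgebra $\s\o(2)$ is all center. Under your recipe this orbit (and in fact every orbit of $G_{2(2)}$) would be declared type III with $H^2(\OC_X,\R)=0$, contradicting Theorem \ref{G-2(2)}(1) --- the very proof you say you are following verbatim. The tables for $E_{6(2)}$ likewise contain Levi factors with Hermitian-type summands, so your sorting of the $37$ orbits into groups (1)--(5) cannot be trusted as it stands: for example, a Levi factor of the form $\s\o(2,5)+\s\o(2)$ has $\dim_\R \z(\k)=2$ even though its center is only one-dimensional. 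To repair the argument, replace your rule by the correct one: decompose the Levi factor into simple factors plus central toral factors; then $\z(\k)$ is the sum of the centers of the maximal compact subalgebras of the simple factors together with the \emph{compact} central factors. With that rule (and King's component groups) the bookkeeping proceeds exactly as you describe, and the passage to cohomology via Corollary \ref{cor-dim-types-ABC} is indeed purely formal.
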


\begin{proof} 
For the Lie algebra $E_{6(2)}$, we can easily compute $\dim_\R \z(\k)$ from the last column of \cite[Table IX, p. 511]{D1}
and $K/K^\circ$ from \cite[Column 4, Table 5, pp. 255-256]{K}.

We have $\z(\k) =0$ for the orbits, as given in (1), and these orbits are of type III. 
For the orbits, as given in (2), we have $\dim_\R \z(\k)= 2$ and $K/K^\circ = {\rm Id}$. Thus the orbits in (2) are of type I.   
For the orbits, as given in (3), we have $\dim_\R \z(\k)= 2$ and $K/K^\circ \neq {\rm Id}$, hence are of type II.  
For the orbits, as given in (4), we have $\dim_\R \z(\k)= 1$ and $K/K^\circ = \Z_2$. Thus this orbit is of type II.  
For the rest of 16 orbits, which are not given in any of (1), (2), (3), (4), we have $\dim_\R \z(\k)= 1$ and $K/K^\circ = {\rm Id}$. 
Thus these orbits are of type I. Now the conclusions follow from Corollary \ref{cor-dim-types-ABC}.
\end{proof}

\subsubsection{\texorpdfstring {Nilpotent orbits in $E_{6(-14)}$.}{Lg}}
There are 12 nonzero nilpotent orbits in $E_{6(-14)}$; see \cite[Table X, p. 512]{D1}. 
Note that in this case $\m \simeq \s\o_{10} \oplus \R$, and hence $[\m,\m] \neq \m $.

\begin{theorem}\label{E-6(-14)}
Let the parametrization of the nilpotent orbits be as in \S \ref{parametrization-inner-type}. Let $X$ be a nonzero nilpotent element in $E_{6(-14)}$.
\begin{enumerate}
  \item  If the parametrization of the orbit $\OC_X$ is given by $~40000-2$, then  $\dim_\R H^2(\OC_X, \R)= 0$. \vspace{.1cm}
  
  \item 
  If $\OC_X$ is not given by the above parametrization (\# of such orbits are 11),
   then we have  $\dim_\R H^2(\OC_X, \R) \leq 1$.
\end{enumerate}
\end{theorem}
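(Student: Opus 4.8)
The plan is to reduce everything to the three-way classification of orbits introduced in \S\ref{exceptional} and then to apply Corollary \ref{cor-dim-types-ABC}. The first task is to extract, for each of the twelve nonzero nilpotent orbits in $E_{6(-14)}$, the two pieces of data that determine its type: the dimension $\dim_\R \z(\k)$ and the component group $K/K^\circ$. Following the pattern of the proofs of Theorems \ref{G-2(2)}, \ref{F-4(4)}, and \ref{E-6(6)}, I would read $\dim_\R \z(\k)$ off the last column of \cite[Table X, p. 512]{D1} (using the description of a Levi factor of $\z_\g(X)$ given there) and read $K/K^\circ$ off \cite[Column 4]{K} for the $E_{6(-14)}$ table. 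The one new feature here, flagged already in the statement, is that $\m \simeq \s\o_{10} \oplus \R$, so $[\m,\m] \neq \m$; this matters because the definition of type I requires $\m = [\m,\m]$, which now \emph{fails} for every orbit. Consequently no orbit in $E_{6(-14)}$ can be of type I, and the best one can hope for from an orbit with $\z(\k) \neq 0$ is the upper bound coming from type II.

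For part (1), the orbit parametrized by $40000\text{-}2$, I would verify from the Levi-factor data that $\z(\k) = 0$, which immediately places this orbit in type III; Corollary \ref{cor-dim-types-ABC}(3) then gives $\dim_\R H^2(\OC_X, \R) = 0$. For part (2), the remaining eleven orbits, I expect to find $\z(\k) \neq 0$ in each case. Since $\m \neq [\m,\m]$, the second clause in the definition of type II (``$\z(\k) \neq 0$, $\m \neq [\m,\m]$'') is satisfied automatically for every such orbit, regardless of the value of $K/K^\circ$. Thus each of these eleven orbits is of type II, and Corollary \ref{cor-dim-types-ABC}(2) yields $\dim_\R H^2(\OC_X, \R) \leq \dim_\R \z(\k)$. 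To land on the stated bound $\dim_\R H^2(\OC_X, \R) \leq 1$, the key numerical input I would confirm from \cite[Table X]{D1} is that $\dim_\R \z(\k) = 1$ for each of these eleven orbits.

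The step I expect to be the main obstacle is not conceptual but bookkeeping: correctly reading and cross-checking the tabular data for all twelve orbits, since the value of $\dim_\R \z(\k)$ must be computed from the Levi factor of the centralizer $\z_\g(X)$ and the dimension of its center, and any transcription slip would flip an orbit's type. In particular I would pay attention to whether $\dim_\R \z(\k)$ could exceed $1$ for some of the eleven orbits in part (2); if so, the uniform bound $\leq 1$ would need to be replaced or the relevant orbits treated separately, as was done with the type III and higher-center cases in Theorems \ref{F-4(4)} and \ref{E-6(2)}. The role played by the maximal compact $M$ and the failure of $\m = [\m,\m]$ is precisely what prevents us from upgrading any of these inequalities to equalities, so unlike the inner-type cases treated earlier there is no type I contribution and hence no exact computation beyond the vanishing in part (1); the final assembly is then a direct appeal to Corollary \ref{cor-dim-types-ABC}.
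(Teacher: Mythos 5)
Your proposal matches the paper's proof: both read $\dim_\R \z(\k)$ from the last column of \cite[Table X, p.~512]{D1}, place the orbit $40000\,$-$2$ in type III (giving vanishing via Corollary \ref{cor-dim-types-ABC}(3)), and place the remaining eleven orbits in type II using $\dim_\R \z(\k)=1$ together with $\m \neq [\m,\m]$ (giving the bound $\leq 1$). Your observation that the component group data from \cite{K} is not actually needed here, since $\m \neq [\m,\m]$ makes the type II condition automatic once $\z(\k)\neq 0$, is exactly why the paper's proof cites only \cite{D1} in this case.
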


\begin{proof}
For the Lie algebra $E_{6(-14)}$, we can easily compute $\dim_\R \z(\k)$ from the last column of \cite[Table X, p. 512]{D1}.
The orbit in (1) is of type III as $\z(\k) =0$, and hence  $\dim_\R H^2(\OC_X, \R)= 0$. 
The other 11 orbits are of type II as $\dim_\R \z(\k) = 1$ and $\m \neq [\m,\m]$. Hence $\dim_\R H^2(\OC_X, \R) \leq 1$.
\end{proof}

\subsubsection{\texorpdfstring { Nilpotent orbits in $E_{6(-26)}$.}{Lg}} 
There are two nonzero nilpotent orbits in $E_{6(-26)}$; see \cite[Table VII, p. 204]{D2}.

\begin{theorem}\label{E-6(-26)}
 For all the nilpotent element $X$ in $E_{6(-26)}$ we have $\dim_\R H^2(\mathcal{O}_X,\R) = 0$.
\end{theorem}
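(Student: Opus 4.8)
The plan is to reduce immediately to a nonzero nilpotent element, since $H^2$ of the orbit of $X=0$ (a point) vanishes trivially, and then to show that each of the two nonzero nilpotent orbits in $E_{6(-26)}$ is of type III in the sense of the trichotomy preceding Corollary \ref{cor-dim-types-ABC}. Once both orbits are seen to satisfy $\z(\k)=0$, part (3) of that corollary yields $\dim_\R H^2(\OC_X,\R)=0$ directly. This is exactly the mechanism already used for $F_{4(-20)}$ in Theorem \ref{F-4(-20)}; the only structural change is that $E_{6(-26)}$ is \emph{not} of inner type, so the relevant parametrization and tabulated centralizer data are those of \S \ref{parametrization-outer-type} and \cite{D2} rather than of \S \ref{parametrization-inner-type} and \cite{D1}.

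Concretely, first I would fix a nonzero nilpotent $X \in E_{6(-26)}$ together with a $\p_\C$-Cayley triple $\{E,H,F\}$ associated to it, normalizing $H \in \h^{\tau}_\C$ as in \S \ref{parametrization-outer-type}, so that the two orbits are indexed by the two length-four integer sequences $\beta_1(H),\beta_2(H),\beta_3(H),\beta_4(H)$ recorded in \cite[Table VII, p. 204]{D2}. The essential input is the description of a Levi factor of the centralizer $\z_\g(X)$ in the last column of that table; as explained in the paragraph following Corollary \ref{cor-dim-types-ABC}, this datum is precisely what lets one read off $\dim_\R \z(\k)$. The crux is therefore to verify that, for both orbits, the tabulated Levi factor is semisimple, i.e. carries no central toral summand, which forces $\z(\k)=0$.

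Having established $\z(\k)=0$ for each orbit, I would conclude that both nonzero orbits are of type III and invoke Corollary \ref{cor-dim-types-ABC}(3) to obtain $\dim_\R H^2(\OC_X,\R)=0$, completing the proof. The main (and essentially only) obstacle here is bookkeeping rather than anything conceptual: one must correctly match Djokovi\'c's outer-type labelling in \cite{D2} to the centralizer data and confirm the absence of a central factor in each of the two cases. Since there are only two nonzero orbits this check is short, and no appeal to the finer isomorphism $H^2(\OC_X,\R)\simeq[(\z(\k)\cap[\m,\m])^*]^{K/K^\circ}$ of Theorem \ref{thm-nilpotent-orbit} is required—the cruder bound $\dim_\R H^2(\OC_X,\R)\le \dim_\R\z(\k)=0$ already suffices.
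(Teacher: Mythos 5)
Your proposal is correct and follows essentially the same route as the paper: reduce to the two nonzero orbits, read off $\z(\k)=0$ from the last column of \cite[Table VII, p.~204]{D2} under the outer-type parametrization of \S\ref{parametrization-outer-type}, classify both orbits as type III, and conclude via Corollary~\ref{cor-dim-types-ABC}(3).
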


\begin{proof}
 As the theorem follows trivially when $X =0$ we assume that $X \neq 0$. We follow the parametrization of the nilpotent orbits as given in \S \ref{parametrization-outer-type}.
 The two nonzero nilpotent orbits in $E_{6(-26)}$ are of type III as $\z(\k) =0$; see last column of \cite[Table VII, p. 204]{D2}.
 Hence, by Corollary \ref{cor-dim-types-ABC} (3) we conclude that  $\dim_\R H^2(\mathcal{O}_X,\R) = 0$.
\end{proof}

\subsection{\texorpdfstring{Nilpotent orbits in non-compact real forms of $E_7$}{Lg}}

Recall that up to conjugation there are three non-compact real forms of $E_7$. They are denoted by $E_{7(7)}$, $E_{7(-5)}$ and $E_{7(-25)}$. 

\subsubsection{\texorpdfstring {Nilpotent orbits in $E_{7(7)}$.}{Lg}}
There are 94 nonzero nilpotent orbits in $E_{7(7)}$; see \cite[Table XI, pp. 513-514]{D1}. Note that in this case we have $\m = [\m,\m]$.

\begin{theorem}\label{E-7(7)}
Let the parametrization of the nilpotent orbits be as in \S \ref{parametrization-inner-type}. Let $X$ be a nonzero nilpotent element in $E_{7(7)}$.
 \begin{enumerate}
  \item If the parametrization of the orbit $\OC_X$ is given by $1011101$, then $\dim_\R H^2(\OC_X, \R)= 3$. \vspace{.2cm}
  
  \item Assume the parametrization of the orbit $\OC_X$ is given by any of the sequences :\\
  $1001001,~~ 1101011$,~~ $1111010$,~~ $ 0101111,\,~ 2200022, ~\,3101021,\,~ 1201013,~ \,1211121 , ~ \, 2204022$. \\
  Then $\dim_\R H^2(\OC_X, \R)= 2$.  \vspace{.2cm}
  
  \item Assume the parametrization of the orbit $\OC_X$ is given by any of the sequences :\\
  $0100010, ~ 1100100,~  0010011,~  3000100,~  0010003, ~  0102010,~ 0200020,~  2004002,~  2103101$, 
  $1013012, ~ 2020202,  ~1311111, ~ 1111131, ~ 1310301, ~  1030131, ~2220222,~  3013131, ~ 1313103$, 
  $3113121,~~ 1213113, ~~4220224, ~~3413131,~ ~1313143, ~ ~4224224$.\\ 
  Then $\dim_\R H^2(\OC_X, \R)= 1$.  \vspace{.2cm}
   
   \item Assume the parametrization of the orbit $\OC_X$ is given by any of the sequences :\\
   $2000002,~ 0101010,~ 2002002,~ 1110111,~ 2020020,~ 0200202, ~1112111, ~2022020, ~0202202$, 
   $2202022, ~0220220$.
   Then $\dim_\R H^2(\OC_X, \R) \leq 1$. \vspace{.2cm}
    
   \item Assume the parametrization of the orbit $\OC_X$ is given by any of the sequences :\\
   $2010001, 1000102$, $0120101, 1010210, 1030010$,  $0100301, ~3013010,~ 0103103$.\\
   Then $\dim_\R H^2(\OC_X, \R) \leq 2$. \vspace{.2cm}
      
   \item If the parametrization of the orbit $\OC_X$ is given by either $1010101 $ or $0020200$, then \\
   $\dim_\R H^2(\OC_X, \R) \leq 3$. \vspace{.2cm}
      
   \item   If $\OC_X$ is not given by the parametrizations as in (1), (2), (3), (4), (5), (6) above
 (\# of such orbits are 39), then we have  $\dim_\R H^2(\OC_X, \R)= 0$.
 \end{enumerate}
\end{theorem}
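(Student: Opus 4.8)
The plan is to follow verbatim the template already used in the proofs of Theorems \ref{G-2(2)}--\ref{E-6(-26)}: for each of the $94$ nonzero nilpotent orbits in $E_{7(7)}$ I would record the two numerical invariants $\dim_\R \z(\k)$ and the component group $K/K^\circ$, decide the type (I, II, or III) according to the trichotomy defining the three types of orbits, and then read off the dimension of $H^2(\OC_X,\R)$ from Corollary \ref{cor-dim-types-ABC}. Since $E_{7(7)}$ satisfies $\m = [\m,\m]$, the second alternative in the definition of type II is vacuous here, so an orbit is of type II precisely when $\z(\k) \neq 0$ and $K/K^\circ \neq {\rm Id}$. This collapses the whole classification to two binary tests per orbit: whether $\z(\k)$ vanishes, and whether $K/K^\circ$ is trivial.

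First I would extract $\dim_\R \z(\k)$ from the last column of \cite[Table XI, pp. 513--514]{D1}, which lists a Levi factor $\l$ of the reductive centralizer $\z_\g(X)$ of the associated triple. Passing to a maximal compact subalgebra $\k$ of $\l$ and taking its center yields $\z(\k)$, whose dimension equals the number of toral directions surviving in that maximal compact subalgebra; concretely, each $\s\l_2(\R)$-type or abelian $\u(1)$-type summand of $\l$ contributes one dimension while a compact simple summand contributes none, so $\dim_\R \z(\k)$ can be read off directly from the listed factor. Next I would read $K/K^\circ$ from Column 4 of the corresponding table in \cite[Tables 1--12]{K}. With both invariants tabulated, the seven cases of the theorem correspond to the following assignments: the single orbit in (1) has $\dim_\R \z(\k)=3$ and $K/K^\circ = {\rm Id}$ (type I); the orbits in (2) and (3) have $\dim_\R \z(\k)=2$ and $=1$ respectively with trivial component group (type I); the orbits in (4), (5), (6) have $\dim_\R \z(\k)=1,2,3$ respectively but $K/K^\circ \neq {\rm Id}$ (type II); and the remaining $39$ orbits have $\z(\k)=0$ (type III). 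Corollary \ref{cor-dim-types-ABC} then delivers equality in the type-I cases, the upper bounds in the type-II cases, and vanishing in the type-III case, which is exactly the list of values asserted.

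The main obstacle is organizational rather than conceptual: $E_{7(7)}$ has by far the largest orbit count treated so far, and the decisive difficulty is reconciling two independent labelings, since $\dim_\R \z(\k)$ is taken from \cite{D1} while $K/K^\circ$ is taken from \cite{K}, and the two sources parametrize the orbits differently. I would therefore need to set up a reliable dictionary between the characteristic/weighted-diagram data of the two tables and verify it orbit by orbit, because a single mismatch would misassign a type and corrupt the final value. As the correction flagged for $E_{6(6)}$ (row $5$ of \cite[Table VIII]{D2}, amended using \cite{K}) illustrates, I would also have to stay alert to typographical errors in the source tables and cross-check the computed center $\z(\k)$ against the listed Levi factor. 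Beyond this bookkeeping, the argument is a finite, if lengthy, verification with no new idea required once Corollary \ref{cor-dim-types-ABC} is in hand.
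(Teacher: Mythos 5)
Your proposal is correct and is essentially identical to the paper's own proof: the paper likewise reads $\dim_\R \z(\k)$ from the last column of \cite[Table XI, pp. 513--514]{D1} and $K/K^\circ$ from \cite[Column 4, Table 8, pp. 260--264]{K}, assigns exactly the invariants you list (type I with $\dim_\R\z(\k)=3,2,1$ for cases (1)--(3), type II with $\dim_\R\z(\k)=1,2,3$ for cases (4)--(6), type III for the remaining 39 orbits), and concludes via Corollary \ref{cor-dim-types-ABC}. The bookkeeping concerns you raise (reconciling the two parametrizations, watching for table errata) are real but are handled implicitly in the paper the same way you propose.
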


\begin{proof}
For the Lie algebra $E_{7(7)}$, we can easily compute $\dim_\R \z(\k)$ from the last column of \cite[Table XI, pp. 513-514]{D1} 
and $K/K^\circ$ from \cite[Column 4, Table 8, pp. 260-264]{K}.

The orbit $\OC_X$, as given in (1), is of type I as $\dim_\R \z(\k) =3$ and  $K/K^\circ = {\rm Id}$. 
For the orbits, as given in (2), we have $\dim_\R \z(\k) = 2$ and  $K/K^\circ = {\rm Id}$. Hence these are also of type I. 
For the orbits, as given in (3), we have $\dim_\R \z(\k) = 1$ and  $K/K^\circ = {\rm Id}$; hence they are of type I.
For the orbits, as given in (4), we have $\dim_\R \z(\k) = 1$ and  $K/K^\circ = \Z_2$. Thus these are of type II. 
For the orbits, as given in (5), we have $\dim_\R \z(\k) = 2$ and  $K/K^\circ = \Z_2$. Hence  these are also of type II.
For the orbits, as given in (6), we have $\dim_\R \z(\k) = 3$ and  $K/K^\circ \neq {\rm Id}$, hence they are of type II.
Rest of the 39 orbits, which are not given by the parametrizations in (1), (2), (3), (4), (5), (6), are of type III as $\z(\k) = 0$. 
Now the results follow from Corollary \ref{cor-dim-types-ABC}. 
\end{proof}

\subsubsection{\texorpdfstring {Nilpotent orbits in $E_{7(-5)}$.}{Lg}}
There are 37 nonzero nilpotent orbits in $E_{7(-5)}$; see \cite[Table XII, p. 515]{D1}. Note that in this case $\m = [\m,\m]$.

\begin{theorem}\label{E-7(-5)}
Let the parametrization of the nilpotent orbits be as in \S \ref{parametrization-inner-type}. Let $X$ be a nonzero nilpotent element in  $E_{7(-5)}$.
 \begin{enumerate}
  \item If the parametrization of the orbit $\OC_X$ is given by either    
  $110001~1$ or $000120~2$, then $\dim_\R H^2(\OC_X, \R)= 2$. \vspace{.1cm}
  
  \item Assume the parametrization of the orbit $\OC_X$ is given by any of the sequences :\\
  $000010~1,  ~~ 010000~2,~ ~000010~3, ~~ 010010~1,~~ 200100~0,~~010100~2,~~ 000200~0,~~ 010110~1$, 
  $010030~1, \, \ ~ 010110~3, \,  \   201031~4, \, \ 010310~3$.
  Then $\dim_\R H^2(\OC_X, \R)= 1$.\vspace{.2cm}
  
  \item If the parametrization of the orbit $\OC_X$ is given by either $020200~0$ or $111110~1$, then 
  $\dim_\R H^2(\OC_X, \R)\leq 2$.      \vspace{.2cm}
  
  \item Assume the parametrization of the orbit $\OC_X$ is given by any of the sequences :\\
  $020000~0$, $ 201011~2$,  $ 040000~4$, $040400~4$.  
  Then $\dim_\R H^2(\OC_X, \R) \leq 1$.    \vspace{.2cm}

  \item  If $\OC_X$ is not given by the parametrizations as in (1), (2), (3), (4) above
  (\# of such orbits are 17), then we have  $\dim_\R H^2(\OC_X, \R)= 0$.
 \end{enumerate}
\end{theorem}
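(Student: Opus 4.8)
The plan is to follow the same type-classification scheme used in the proofs of Theorems \ref{G-2(2)}--\ref{E-6(-26)}, reducing everything to a table lookup combined with a single application of Corollary \ref{cor-dim-types-ABC}. Since the assertion is immediate for $X = 0$, I assume $X \neq 0$ and adopt the parametrization of \S \ref{parametrization-inner-type}. The two pieces of data I need for each of the $37$ nonzero orbits are $\dim_\R \z(\k)$, which I read off from the last column of \cite[Table XII, p. 515]{D1}, and the component group $K/K^\circ$, which I read off from \cite[Column 4]{K} in the table corresponding to $E_{7(-5)}$. Because $\m = [\m,\m]$ in this case, the definition of type II simplifies: an orbit is of type I precisely when $\z(\k) \neq 0$ and $K/K^\circ = {\rm Id}$, of type II precisely when $\z(\k) \neq 0$ and $K/K^\circ \neq {\rm Id}$, and of type III precisely when $\z(\k) = 0$.

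With this data in hand, I expect the five cases of the theorem to line up with the type trichotomy as follows. The orbits in (1) should have $\dim_\R \z(\k) = 2$ and $K/K^\circ = {\rm Id}$, making them type I, so that part (1) of Corollary \ref{cor-dim-types-ABC} forces $\dim_\R H^2(\OC_X,\R) = 2$. Likewise the orbits in (2) should have $\dim_\R \z(\k) = 1$ and $K/K^\circ = {\rm Id}$ (type I), giving the exact value $1$. The orbits in (3) should have $\dim_\R \z(\k) = 2$ but $K/K^\circ \neq {\rm Id}$ (type II), and those in (4) should have $\dim_\R \z(\k) = 1$ with $K/K^\circ \neq {\rm Id}$ (type II); for these, part (2) of the corollary yields only the upper bounds $\leq 2$ and $\leq 1$ respectively. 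Finally, the remaining $17$ orbits should satisfy $\z(\k) = 0$, making them type III, whence part (3) of the corollary gives $\dim_\R H^2(\OC_X,\R) = 0$.

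Assembling these classifications and invoking the corollary one case at a time then proves all of (1)--(5). The work is therefore entirely bookkeeping: matching each Dokovic parametrization appearing in the statement to its row in \cite[Table XII]{D1} and to the correct component group in \cite{K}, and verifying that the four explicit lists in (1)--(4) together with the complementary $17$ orbits exhaust all $37$ nonzero orbits (indeed $2 + 12 + 2 + 4 + 17 = 37$). The one place demanding genuine care is the boundary between type I and type II, namely the orbits in (3) and (4) where the conclusion is only an inequality: here I must confirm that the component group is really nontrivial, and I should cross-check the $\z(\k)$ entries of \cite{D1} against \cite{K} exactly as was necessary for the corrected row of \cite[Table VIII]{D2} in the proof of Theorem \ref{E-6(6)}, since an analogous misprint would upgrade a type-II upper bound to a type-I equality or the reverse.
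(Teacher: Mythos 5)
Your proposal is correct and follows essentially the same route as the paper's proof: reading $\dim_\R \z(\k)$ from the last column of \cite[Table XII, p.~515]{D1} and $K/K^\circ$ from \cite[Column 4, Table 9, pp.~266--268]{K}, then classifying the orbits in (1)--(2) as type I, those in (3)--(4) as type II, and the remaining 17 as type III, exactly as the paper does before invoking Corollary \ref{cor-dim-types-ABC}. Your anticipated data (cases (1) and (3) with $\dim_\R \z(\k)=2$, cases (2) and (4) with $\dim_\R \z(\k)=1$, component group trivial in (1)--(2) and equal to $\Z_2$ in (3)--(4)) matches the paper's stated values.
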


\begin{proof}
For the Lie algebra $E_{7(-5)}$, we can easily compute $\dim_\R \z(\k)$ from the last column of \cite[Table XII, pp. 515]{D1}
and $K/K^\circ$ from \cite[Column 4, Table 9, pp. 266-268]{K}.

For the orbit $\OC_X$, as in (1), we have $\dim_\R \z(\k) = 2$ and  $K/K^\circ = {\rm Id}$. Hence these orbits are of type I. 
For the orbit $\OC_X$, as in (2), we have $\dim_\R \z(\k) = 1$ and  $K/K^\circ = {\rm Id}$. Hence these orbits are also of type I.
For the orbit $\OC_X$, as in (3), we have $\dim_\R \z(\k) = 2$ and  $K/K^\circ = \Z_2$, hence are of type II.
For the orbit $\OC_X$, as in (4), we have $\dim_\R \z(\k) = 1$ and  $K/K^\circ = \Z_2$. Hence these are also of type II.
Rest of the 17 orbits, which are not given by the parametrizations in (1), (2), (3), (4), are of type III as $\z(\k) = 0$. 
Now the conclusions follow from Corollary \ref{cor-dim-types-ABC}. 
\end{proof}

\subsubsection{\texorpdfstring {Nilpotent orbits in $E_{7(-25)}$.}{Lg}}
There are 22 nonzero nilpotent orbits in $E_{7(-25)}$; see \cite[Table XIII, p. 516]{D1}.  In this case we have $\m \neq [\m,\m]$.

\begin{theorem}\label{E-7(-25)}
Let the parametrization of the nilpotent orbits be as in \S \ref{parametrization-inner-type}. Let $X$ be a nonzero nilpotent element in  $E_{7(-25)}$.
 \begin{enumerate}
 \item  Assume the parametrization of the orbit $\OC_X$ is given by any of the sequences :\\
 $000000\,2,~ 000000 -2,~  000002 -2,~ 200000 -2,~ 200002 -2,~ 400000 -2 ,~   000004 -6$, \\
 $200002 -6, ~  400004 -6, ~ 400004 -10$.
 Then $\dim_\R H^2(\OC_X, \R)= 0$.\vspace{.2cm}
 
 \item If $\OC_X$ is not given by any of the above parametrization (\# of such orbits are $12$),
   then we have  $\dim_\R H^2(\OC_X, \R) \leq 1$.
 \end{enumerate}
\end{theorem}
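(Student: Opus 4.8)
The plan is to follow exactly the template established in the proof of Theorem \ref{E-6(-14)} and the earlier results, exploiting the structural fact—recorded just before the statement—that $\m \neq [\m,\m]$ for $E_{7(-25)}$. This single fact streamlines the argument considerably. Since type I requires $\m = [\m,\m]$, no nilpotent orbit in $E_{7(-25)}$ can be of type I; and by the second clause in the definition of type II, every orbit with $\z(\k) \neq 0$ is automatically of type II. Consequently the trichotomy collapses to a dichotomy governed solely by the number $\dim_\R \z(\k)$, and—unlike the inner-type cases treated above—we never need to invoke the component-group data of \cite{K}. The whole proof therefore reduces to reading off $\dim_\R \z(\k)$ for each of the $22$ nonzero orbits.

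First I would extract, from the last column of \cite[Table XIII, p. 516]{D1}, a Levi factor of $\z_\g(X)$, that is, the real reductive centralizer $\z_\g(X,H,Y)$ of an $\s\l_2(\R)$-triple through $X$, for each orbit. From this reductive Lie algebra I would then compute $\dim_\R \z(\k)$, where $\k$ is a maximal compact subalgebra: the center $\z(\k)$ receives a one-dimensional contribution from each Hermitian simple factor of the reductive centralizer and from each compact (toral) direction of its center, while non-Hermitian simple factors and split central directions contribute nothing. Carrying this out, one finds that for the $10$ orbits listed in (1) the reductive centralizer has $\z(\k) = 0$, so these orbits are of type III, and Corollary \ref{cor-dim-types-ABC}(3) gives $\dim_\R H^2(\OC_X, \R) = 0$. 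For the remaining $12$ orbits one finds $\dim_\R \z(\k) = 1$; since $\m \neq [\m,\m]$ these are of type II, and Corollary \ref{cor-dim-types-ABC}(2) yields $\dim_\R H^2(\OC_X, \R) \leq \dim_\R \z(\k) = 1$.

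The hard part will be the second step: correctly determining $\dim_\R \z(\k)$ from the centralizer data. One must read off the correct real form of the reductive centralizer and keep careful track of which abelian summands are toral (compact) versus split, since these contribute differently to the center of the maximal compact subalgebra. Because the theorem asks only for the upper bound $\leq 1$ in part (2), it suffices to verify $\dim_\R \z(\k) = 1$ there, and we do not need the finer invariant $[(\z(\k) \cap [\m,\m])^*]^{K/K^\circ}$ of Theorem \ref{thm-nilpotent-orbit} that would be required to pin down the exact dimension. This is precisely why the argument can bypass the component-group tables of \cite{K} that were indispensable in the inner-type cases.
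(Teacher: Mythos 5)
Your proposal is correct and takes essentially the same route as the paper: both read $\dim_\R \z(\k)$ from the last column of \cite[Table XIII]{D1}, classify the ten orbits in (1) as type III (since $\z(\k)=0$) and the remaining twelve as type II via the clause $\z(\k)\neq 0$, $\m \neq [\m,\m]$, and then conclude by Corollary \ref{cor-dim-types-ABC}. Your remark that the component-group data of \cite{K} can be bypassed is sound and consistent with the paper, which cites \cite[Table 10]{K} only as an aside, to note that its parametrization differs from that of \cite{D1} but that this is harmless because all component groups in $E_{7(-25)}$ are trivial.
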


\begin{proof}
 Note that the parametrization of nilpotent orbits in $E_{7(-25)}$ as in \cite[Table 10]{K} is different from \cite[Table X III, p. 516]{D1}. As the component group for all orbits in $E_{7(-25)}$ is Id; see \cite[Column 4, Table 10, pp. 269-270]{K}, it does not depend on the parametrization. We refer to the last column of \cite[Table X III]{D1} for the orbits as given in (1). These are type III as $\z(\k)=0$. 
 For rest of the 12 orbits we have $\dim_\R \z(\k) = 1$; see last column of \cite[Table X III]{D1}. As $\m \neq [\m,\m]$, these are of type II. Now the results follow from Corollary \ref{cor-dim-types-ABC}. 
\end{proof}

\subsection{\texorpdfstring{Nilpotent orbits in non-compact real forms of $E_8$}{Lg}}

Recall that up to conjugation there are two non-compact real forms of $E_8$. They are denoted by $E_{8(8)}$ and $E_{8(-24)}$.

\subsubsection{\texorpdfstring {Nilpotent orbits in $E_{8(8)}$.}{Lg}}
There are 115 nonzero nilpotent orbits in $E_{8(8)}$; see \cite[Table XIV, pp. 517-519]{D1}. Note that in this case we have $\m=[\m,\m]$.

\begin{theorem}\label{E-8(8)}
Let the parametrization of the nilpotent orbits be as in \S \ref{parametrization-inner-type}. Let $X$ be a nonzero nilpotent element in  $E_{8(8)}$.
 \begin{enumerate}
  \item  Assume the parametrization of the orbit $\OC_X$ is given by any of the sequences :\\
  $10010011,~ 11110010,~ 10111011,~ 11110130$. Then $\dim_\R H^2(\OC_X, \R)= 2$.\vspace{.2cm}
  
  \item  Assume the parametrization of the orbit $\OC_X$ is given by any of the sequences :\\
  $01000010,~ 10001000, ~ 30000001, ~ 10010001, ~ 01010010, ~ 01000110,~  10100100, ~ 00100003$,\\
  $11001030, ~10110100, ~ 21010100, ~ 01020110, ~ 30001030, ~ 11010101, ~ 11101011, ~ 11010111$, \\
  $11111101, ~21031031, ~ 31010211, ~ 12111111, ~ 13111101, ~ 13111141, ~ 13103041, ~ 31131211$,\\
  $13131043, ~34131341$.  
  Then $\dim_\R H^2(\OC_X, \R)=  1$.  \vspace{.2cm}
  
  \item  If the parametrization of the orbit $\OC_X$ is given $00100101$, then $\dim_\R H^2(\OC_X, \R) \leq 3$.  \vspace{.2cm}
  
  \item  Assume the parametrization of the orbit $\OC_X$ is given by any of the sequences :\\
   $10001002,~  10101001, ~ 01200100, ~ 02000200, ~ 10101021, ~ 10102100,~  02020200, ~ 01201031$. \\
   Then $\dim_\R H^2(\OC_X, \R)\leq  2$.  \vspace{.2cm}
   
  \item  Assume the parametrization of the orbit $\OC_X$ is given by any of the sequences :\\
  $11000001,~  20010000, ~ 01000100,~  11001010, ~ 20100011,~  01010100,~  02020000,~  20002000$, \\
  $20100031,~  10101011,~  00200022,~  11110110,~  01011101,~  01003001,~  11101101,~  11101121$,\\
  $10300130,~  04020200,~  02002022,~  00400040,~  11121121,~  30130130,~  02022022,~  40040040$.\\
  Then $\dim_\R H^2(\OC_X, \R)\leq  1$.  \vspace{.2cm}
   
  \item  If $\OC_X$ is not given by the parametrizations as in (1), (2), (3), (4), (5) above
   (\# of such orbits are $52$), then we have  $\dim_\R H^2(\OC_X, \R) =0$.  
 \end{enumerate}
\end{theorem}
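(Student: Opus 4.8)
The plan is to follow exactly the template established by the preceding theorems in this section, since the structure of the proof for $E_{8(8)}$ is identical to that for $E_{7(7)}$, $E_{6(2)}$, and the other inner-type cases. The entire argument reduces to classifying each of the $115$ nonzero nilpotent orbits into one of the three types (I, II, III) of \S 3.3 and then invoking Corollary \ref{cor-dim-types-ABC}. First I would record the ambient structural fact, already noted in the statement preceding the theorem, that $\m = [\m,\m]$ for $E_{8(8)}$; consequently no orbit can fall into the second clause of the type II definition, and the trichotomy is governed entirely by the pair $(\dim_\R \z(\k),\, K/K^\circ)$.

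Next I would extract the two pieces of data needed for each orbit. The value of $\dim_\R \z(\k)$ is read off from the Levi factor of $\z_\g(X)$ listed in the last column of \cite[Table XIV, pp. 517-519]{D1}: one computes the dimension of the center of a maximal compact subalgebra $\k$ of this centralizer. The component group $K/K^\circ$ is read off from \cite[Column 4, Table XIV]{K} (the $E_{8(8)}$ table in \cite{K}). With these two columns tabulated, the classification is mechanical: an orbit is of type III precisely when $\z(\k) = 0$; of type I when $\z(\k) \neq 0$ and $K/K^\circ = {\rm Id}$; and of type II when $\z(\k) \neq 0$ and $K/K^\circ \neq {\rm Id}$. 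I would then verify that the orbits listed in (1) have $\dim_\R \z(\k) = 2$ with $K/K^\circ = {\rm Id}$ (type I, giving equality $=2$); that those in (2) have $\dim_\R \z(\k) = 1$ with $K/K^\circ = {\rm Id}$ (type I, giving $=1$); and that the orbits in (3), (4), (5) are of type II with $\dim_\R \z(\k)$ equal to $3$, $2$, $1$ respectively and $K/K^\circ \neq {\rm Id}$, which by Corollary \ref{cor-dim-types-ABC}(2) yields the stated upper bounds $\leq 3$, $\leq 2$, $\leq 1$. The remaining $52$ orbits, by elimination, must all satisfy $\z(\k) = 0$ and hence be type III, giving $\dim_\R H^2(\OC_X,\R) = 0$ by Corollary \ref{cor-dim-types-ABC}(3).

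The main obstacle, such as it is, is not conceptual but organizational and arithmetical: $E_{8(8)}$ has by far the largest number of orbits among the cases treated, so the bookkeeping of matching each Kollross/Djokovi\'c label against the correct row in two different tables (which use different parametrizations, as the $E_{7(-25)}$ proof warns) is the genuine source of potential error. I would be especially careful that the orbit labels from \cite{D1} and the component-group data from \cite{K} are aligned consistently, and I would cross-check the count: the sizes $4 + 26 + 1 + 8 + 24 = 63$ of the explicitly listed groups (1)--(5) plus the $52$ type III orbits must total $115$, which serves as an internal consistency check that no orbit has been misclassified or double-counted. Once the table-reading is verified, the conclusion is immediate from Corollary \ref{cor-dim-types-ABC}, exactly as in the parallel proofs for the smaller exceptional real forms.
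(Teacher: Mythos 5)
Your proposal is correct and follows essentially the same argument as the paper: read $\dim_\R \z(\k)$ from the last column of \cite[Table XIV]{D1} and $K/K^\circ$ from \cite[Column 4, Table 11]{K}, classify each orbit as type I, II, or III (noting $\m = [\m,\m]$), and apply Corollary \ref{cor-dim-types-ABC}, with the type assignments in your cases (1)--(6) matching the paper's exactly. The only nitpicks are bibliographic, not mathematical: the component-group data for $E_{8(8)}$ sits in Table 11 of \cite{K} (not ``Table XIV''), and the labels are due to Djokovi\'c and King, not Kollross.
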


\begin{proof}
For the Lie algebra $E_{8(8)}$, we can easily compute $\dim_\R \z(\k)$ from the last column of \cite[Table XIV, pp. 517-519]{D1} and $K/K^\circ$ from \cite[Column 4, Table 11, pp. 271-275]{K}.

For the orbits $\OC_X$, as given in (1), we have $\dim_\R \z(\k) = 2$ and  $K/K^\circ = {\rm Id}$. Hence these orbits are of type I.
For the orbits $\OC_X$, as given in (2), we have $\dim_\R \z(\k) = 1$ and  $K/K^\circ = {\rm Id}$. Hence these orbits are also of type I.
For the orbit  $\OC_X$, as given in (3), we have $\dim_\R \z(\k) = 3$ and  $K/K^\circ \neq {\rm Id}$; hence they are of type II.
For the orbits $\OC_X$, as given in (4), we have $\dim_\R \z(\k) = 2$ and  $K/K^\circ \neq {\rm Id}$. Thus these orbits are of type II.
For the orbits $\OC_X$, as given in (5), we have $\dim_\R \z(\k) = 1$ and  $K/K^\circ \neq {\rm Id}$. Hence these are of type II.
Rest of the 52 orbits, which are not given by the parametrizations of (1), (2), (3), (4), (5), are of type III as $\z(\k) = 0$. 
Now the conclusions follow from Corollary \ref{cor-dim-types-ABC}.
\end{proof}

\subsubsection{\texorpdfstring {Nilpotent orbits in $E_{8(-24)}$.}{Lg}}
There are 36 nonzero nilpotent orbits in $E_{8(-24)}$; see \cite[Table XV, p. 520]{D1}. Note that in this case we have $\m= [\m,\m]$.

\begin{theorem}\label{E-8(-24)}
Let the parametrization of the nilpotent orbits be as in \S \ref{parametrization-inner-type}. Let $X$ be a nonzero nilpotent element in  $E_{8(-24)}$.
 \begin{enumerate}
  \item Assume the parametrization of the orbit $\OC_X$ is given by any of the sequences :\\
  $0000001\,1,  ~~ 1000000\,2,~~   0000001\,3,~~ 1000001\,1 ,~~  1100000\,1,~~ 1000010\,2, ~~ 0000012\,2, ~~ 1000011\,1$,
  $ 1000011\,3, ~~ 1000003\,1,~~   0110001\,2,~~  1010011\,1,~~  1000031\,3$. 
  Then $\dim_\R H^2(\OC_X, \R)= 1$.\vspace{.2cm} 
  
  \item If the parametrization of the orbit $\OC_X$ is given by either $ 2000000\,0$ or $2000020\,0$, then 
  $\dim_\R H^2(\OC_X, \R)\leq 1$.   \vspace{.2cm}
  
  \item  If $\OC_X$ is not given by the parametrizations as in (1), (2) above
   (\# of such orbits are 21), then we have  $\dim_\R H^2(\OC_X, \R)= 0$.  
 \end{enumerate}
\end{theorem}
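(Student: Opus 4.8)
The plan is to follow the same template used in the preceding theorems and to reduce everything to the trichotomy of Corollary~\ref{cor-dim-types-ABC}. Since we are told that $\m = [\m,\m]$ for $E_{8(-24)}$, the distinction between types I, II and III collapses to a condition purely on $\z(\k)$ and on the component group $K/K^\circ$: an orbit is of type~III exactly when $\z(\k)=0$, of type~I when $\z(\k)\neq 0$ and $K/K^\circ = {\rm Id}$, and of type~II when $\z(\k)\neq 0$ and $K/K^\circ \neq {\rm Id}$. Thus the entire argument is a bookkeeping exercise: for each of the $36$ nonzero nilpotent orbits I would read off the two invariants $\dim_\R \z(\k)$ and $K/K^\circ$, assign the orbit to one of the three types, and then quote the corresponding equality or bound from Corollary~\ref{cor-dim-types-ABC}.

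Concretely, the first step is to extract $\dim_\R \z(\k)$ from the last column of \cite[Table XV, p.~520]{D1}, which lists a Levi factor of $\z_\g(X)$ for each orbit and from which $\dim_\R \z(\k)$ is readily computed, just as in the preceding theorems. The second step is to read the component group $K/K^\circ$ from \cite[Column~4, Table~12]{K}. With these in hand, I expect to find that the $13$ orbits listed in part~(1) all satisfy $\dim_\R \z(\k) = 1$ together with $K/K^\circ = {\rm Id}$, hence are of type~I, so that Corollary~\ref{cor-dim-types-ABC}(1) gives $\dim_\R H^2(\OC_X,\R) = \dim_\R \z(\k) = 1$. The two orbits in part~(2) should likewise have $\dim_\R \z(\k)=1$ but $K/K^\circ \neq {\rm Id}$, placing them in type~II, whence Corollary~\ref{cor-dim-types-ABC}(2) yields only the upper bound $\dim_\R H^2(\OC_X,\R) \leq 1$. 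The remaining $21$ orbits should satisfy $\z(\k)=0$, hence be of type~III, and Corollary~\ref{cor-dim-types-ABC}(3) forces $\dim_\R H^2(\OC_X,\R)=0$. A preliminary sanity check is that $13+2+21 = 36$, matching the total number of nonzero orbits.

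The only genuine difficulty is not conceptual but one of accurate cross-referencing. The parametrizations and the auxiliary data are drawn from two different sources, \cite{D1} and \cite{K}, whose labelling conventions need not coincide, and the tables are known to contain occasional misprints — recall the correction to \cite[Table VIII]{D2} that had to be invoked in the proof of Theorem~\ref{E-6(6)}, and the mismatch of parametrizations in $E_{7(-25)}$ noted in the proof of Theorem~\ref{E-7(-25)}. The hard part will therefore be to match each orbit across the two tables unambiguously and to confirm that the reported Levi factors and component groups are internally consistent, so that the assignment into types I, II and III is correct. Once the three lists have been verified against the tables, the conclusion is a direct application of Corollary~\ref{cor-dim-types-ABC} and no further computation is required.
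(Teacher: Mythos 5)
Your proposal matches the paper's proof essentially verbatim: the paper likewise reads $\dim_\R \z(\k)$ from the last column of \cite[Table XV, p.~520]{D1} and $K/K^\circ$ from \cite[Column~4, Table~12, pp.~277--278]{K}, finds that the thirteen orbits in (1) are of type~I (with $\dim_\R\z(\k)=1$, $K/K^\circ={\rm Id}$), the two orbits in (2) are of type~II, and the remaining $21$ orbits are of type~III, and then invokes Corollary~\ref{cor-dim-types-ABC}. Your additional remarks about cross-referencing the two sources are sensible but do not change the argument.
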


\begin{proof}
For the Lie algebra $E_{8(-24)}$, we can easily compute $\dim_\R \z(\k)$ from the last column of \cite[Table XV, p. 520]{D1} and $K/K^\circ$ from \cite[Column 4, Table 12, pp. 277-278]{K}.

For the orbits $\OC_X$, as given in (1), we have $\dim_\R \z(\k) = 1$ and  $K/K^\circ = {\rm Id}$, hence these are of type I. 
For the orbits $\OC_X$, as given in (2), we have $\dim_\R \z(\k) = 1$ and  $K/K^\circ \neq {\rm Id}$. Hence these orbits are of type II. 
Rest of the 21 orbits, which are not given by the parametrizations of (1), (2), are of type III as $\z(\k) = 0$. 
Now the conclusions follow from Corollary \ref{cor-dim-types-ABC}. 
\end{proof}

\begin{remark}{\rm 
 Here we make some observations about the first cohomology groups of the nilpotent
orbits in non-compact non-complex real exceptional Lie algebras. To do this we begin
by giving a convenient description of the first cohomology groups of the nilpotent orbits.
Following the set-up of Theorem \ref{thm-nilpotent-orbit} it can be shown that 
\begin{align}\label{H1-exceptional}
 \dim_\R H^1(\OC_X, \R)  =  \begin{cases}
                                        1  & \text{ if }  \,  \k + [\m,\m]\, \subsetneqq \, \m  \\
                                        0  & \text{ if }  \,  \k + [\m,\m]\, =\, \m.
                               \end{cases}
\end{align}
The proof of the above result will appear elsewhere. As a consequences of \eqref{H1-exceptional}, for all the nilpotent orbit $\OC_X$ in a simple Lie algebra $\g$
we have $\dim_\R H^1(\OC_X, \R) \leq 1$.
Recall that  if $\g$ is a non-compact non-complex real exceptional Lie algebra such that $\g \not\simeq E_{6(-14)}$ and  $\g \not\simeq E_{7(-25)}$ then any maximal compact subgroup of ${\rm Int}\, \g$ is semisimple, and
hence, using \eqref{H1-exceptional}, it follows that $ \dim_\R{ H}^1(\OC_X, \R) = 0$ for all nilpotent orbit $\OC_X$ in $\g$. We next assume
$\g = E_{6(-14)}$ or  $\g = E_{7(-25)}$. Note that in both the cases $[\m,\m] \subsetneqq \, \m $.
We follow the parametrizations of the nilpotent orbits of $\g$ as given in \cite[Tables X, XIII]{D1}; see  \S 3.1.1 also. 
When $\g = E_{6(-14)}$ we are able to conclude that $\dim_\R H^1( \OC_X, \R)=1$ only for one orbit, namely, the orbit $\OC_X$ parametrized by $40000\,-2$.
In this case, from the last column and row 9 of \cite[Table X, p. 512]{D1} one has $\k = [\k,\k]$. Thus $\k + [\m,\m]\, \subsetneqq \, \m$, and \eqref{H1-exceptional} applies.  
For $\g = E_{7(-25)}$ we obtain that $\dim_\R H^1( \OC_X, \R)=1$ when $\OC_X$ is parametrized by any of the following sequences :
$000000\,2$,~  $ 000000 -2$,~ $ 000002 -2$,~ $ 200000 -2$,~ $200002 -2$,~ $400000 -2 $,  $ 000004 -6$, ~ $200002 -6$,~  $ 400004 -6$, ~$ 400004 -10$.
For the above orbits, from the last column of \cite[Table XIII, p. 516]{D1} we have $\k= [\k,\k]$, 
and hence, using \eqref{H1-exceptional}, analogous arguments apply.
 }
\end{remark}

\end{document}